\newtheorem{lemma}{Lemma}
\newtheorem{assumption}{Assumption}
\newtheorem{theorem}[lemma]{Theorem}
\newtheorem{remark}{Remark} 
\renewcommand{\vec}{\boldsymbol}
\newcommand{\vertiii}[1]{{\left\vert\kern-0.25ex\left\vert\kern-0.25ex\left\vert #1 
		\right\vert\kern-0.25ex\right\vert\kern-0.25ex\right\vert}}
\renewcommand\NAT@citesuper[3]{\ifNAT@swa
\if*#2*\else#2\NAT@spacechar\fi
\unskip\kern\p@\textsuperscript{\NAT@@open#1\if*#3*\else,\NAT@spacechar#3\fi\NAT@@close}%
   \else #1\fi\endgroup}
\renewcommand{\kappa}{\varkappa}
 \renewcommand{\vec}[1]{\mathbf{#1}}
 \providecommand{\keywords}[1]{\textbf{Keywords}  #1}
\begin{document}

\title{A locally modified second-order finite element method for interface problems {and its implementation in 2 dimensions}}

\date{}

%
\author{Stefan Frei\thanks{Department of Mathematics \& Statistics, University of Konstanz, Germany (stefan.frei@uni-konstanz.de)} 
\and Gozel Judakova\thanks{Institut f\"ur Analysis und Numerik, University of Magdeburg, Germany, (gozel.judakova@ovgu.de)}
\and Thomas Richter\thanks{Institut f\"ur Analysis und Numerik, University of Magdeburg, Germany (thomas.richter@ovgu.de)}}
\date{}

\maketitle

\abstract{
The locally modified finite element method, which is introduced in \cite{Frei1}, is 
a simple fitted finite element method that is able to resolve weak discontinuities in interface problems. The method is based on 
a fixed structured coarse mesh, which is then refined into sub-elements to resolve an interior interface.
 In this work, we extend the locally modified finite element method {in two space dimensions} to second order using an isoparametric approach in the 
 interface elements. Thereby we need to take care that the resulting curved edges do not lead to degenerate sub-elements. We prove optimal a priori error estimates in the $L^2$-norm and in a {discrete} energy norm.
Finally, we present numerical examples to substantiate the theoretical findings.\\}

\keywords{fitted finite elements, interface problem, a priori error estimates, weak discontinuities}

\section{Introduction}
In this paper, we extend the locally modified finite element method introduced in \cite{Frei1, RichterBook, FreiDiss} to higher order. We investigate 
interface problems, where the normal derivative of the solution may have a jump over an interior interface. 
{Examples of such interface problems are ubiquitous in technology, industry, science and medicine. Some of the most prominent examples include fluid-structure interactions~\cite{RichterBook, BazilevsTakizawaTezduyarBuch} or multiphase flows~\cite{GrossReusken}. Fluid-structure interactions arise in aerodynamical applications like flow around airplanes or parachutes~\cite{Steinetal2000}, in biomedical problems such as blood flow through the cardiovascular system~\cite{Peskin1972, VandeVosseetal2003, Formaggiaetal2010} or the airflow within the respiratory system~\cite{WallRabczuk2008} and even in tribological applications~\cite{KnaufFreiRichterRannacher}. Multiphase problems include 
gas-liquid and particle-laden gas flows, rising bubbles~\cite{Hysingetal2009}, droplets in microfluidic devices~\cite{ClausKerfriden2019} or the simulation of tumor growth~\cite{Garckeetal2018}.
Another field of application are shape or topology optimization problems including multi-component structures~\cite{GanglLanger, BurmanHeLarson2021}. The simplest possible setting, which is the scope of the present paper, is a diffusion problem where the coefficient is discontinuous across an interior interface.
}


We assume that the domain {$\Omega \subset \mathbb{R}^2$} is divided into $\Omega = \Omega_1 \cup \Gamma \cup \Omega_2$ with {an interface $\Gamma\subset \partial \Omega_1 \cap \partial \Omega_2$, such that $\overline{\Gamma}= \partial \Omega_1 \cap \partial \Omega_2$,}
and a discontinuous diffusion coefficient $\nu>0$ across $\Gamma$. In order to simplify the analysis we will assume that the outer domain $\Omega$ is a two-dimensional convex domain with polygonal boundary. Some remarks on corresponding three-dimensional methods will be given in {Remark~\ref{rem.3d}.} We consider the equations
\begin{align}
  -\nabla \cdot (\nu_i \nabla u) &= f \quad { \text{in} \; \Omega_i}, \;\; i=1,2,\label{laplace}\\
  [u] = 0,\; [\nu \partial_n u] &= 0\quad\text{on} \; \Gamma,\label{interfaceNormal} \\
  u &= 0\quad \text{on} \; \partial \Omega,
\end{align}
where $\nu|_{\Omega_i}:=\nu_i$, $i=1,2$ and
the jump $[w]$ at the interface $\Gamma$ with normal vector $\vec{n}$ is defined by
\[
  [w]({\vec{x}}):=\lim_{s \searrow 0} w({\vec{x}+s\vec{n}}) - \lim_{s \nearrow 0} w({\vec{x}+s\vec{n}}), \quad {\vec{x}}\in \Gamma.
\]
The corresponding variational formulation of the problem (\ref{laplace}) is given by
\begin{align}\label{VarForm}
  u\in H_0^1(\Omega): \; \; \sum_{i=1}^{2}(\nu_i \nabla u, \nabla \varphi)_{\Omega_i} = (f,\varphi)_\Omega \; \; \, \forall \varphi\in H_0^1(\Omega). 
\end{align}
This interface problem is intensively discussed in the literature. Babu\v{s}ka \cite{Babuska} shows that a standard finite element ansatz {has} low accuracy, 
regardless of the polynomial degree of the finite element space,
$$||u-u_h||_{\Omega} = \mathcal{O}(h),\quad ||\nabla (u-u_h)||_{\Omega} = \mathcal{O}(h^{1/2}),$$
{where throughout the paper $\|\cdot\|_\Omega := \|\cdot\|_{L^2(\Omega)}$ denotes the $L^2$-norm.}
To improve the accuracy, the interface needs to be resolved within the discretization.  
Frei and Richter \cite{Frei1} presented a locally modified finite element method based on first-order polynomials with first-order accuracy in the energy norm and second order in the $L^2$-norm.
The method is based on a fixed coarse \emph{patch mesh} consisting of quadrilaterals, which is independent of the position of the interface. 
The patch elements are then divided into sub-elements, such that the interface is locally resolved. The discretization is based on piecewise 
linear finite elements which 
has a natural extension to higher order finite element spaces. 

{Due to the fixed background \textit{patch mesh} this approach is particularly suitable for problems involving moving interfaces, where functions $u_h(t_{n-1})$ and $u_h(t_n)$ defined on different sub-meshes need to be integrated against each other within a time-stepping scheme~\cite{FreiRichter2017}. Due to the implicit adaption of the finite element spaces within the locally modified finite element method, a costly re-meshing procedure is avoided. Similarly, the locally modified finite element method might be useful in shape or topology optimization problems, where problems need to be solved for different interface and boundary positions, while approaching the solution~\cite{BurmanHeLarson2021, GanglLanger}.
}

The \emph{locally modified finite element method} has been used by the authors and 
co-workers~\cite{FreiRichterWick_Growth,FreiRichterWick_enumath1,
FreiRichterWick_enumath2,FreiDiss},
and by Langer \& Yang~\cite{LangerYang} for fluid-structure interaction (FSI) problems, including the 
transition from FSI to solid-solid contact~\cite{BurmanFernandezFrei, BurmanFernandezFreiGerosa, FreiRichter2017Sammelband}. 
Holm et al.~\cite{Holmetal} and Gangl \& Langer~\cite{GanglLanger} used a
corresponding approach based on triangular patches, the latter work being motivated by a topology optimization problem. A pressure stabilization 
technique for flow problems has been developed
in~\cite{FreiPressure} and a suitable (second-order) time discretization scheme in~\cite{FreiRichter2017}. 
Details {of} the implementation in deal.ii 
and the corresponding source code have 
been published in~\cite{FreiRichterWick_impl, zenodo}.
Extensions to three space dimensions have been developed by 
Langer \& Yang~\cite{LangerYang2015}, where {hexahedral} coarse cells are divided into sub-elements consisting of {hexahedra} and tetrahedra, and by H\"ollbacher \& Wittum, where a coarse mesh consisting of tetrahedra is sub-divided into hexahedrons, prisms and pyramids~\cite{HoellbacherWittum, Vogeletal2013}.

Alternative approaches are \text{unfitted methods}, where the mesh is fixed and does not resolve the interface. Prominent examples are the extended 
finite element method {(XFEM~\cite{MoesDolbowBelytschko1999, daux2000arbitrary,chessa2003extended,fries2010extended})} and the generalised finite element (GFEM~\cite{BabuskaBanarjeeOsborn2004}), 
{where the finite element space is enriched by suitable functions that contain certain properties of the solution (for example discontinuities in the function or its derivative).
Higher-order approximations within the XFEM approach have been developed by Cheng \& Fries~\cite{ChengFries2010} and by Dr{\'e}au, Chevaugeon \& Mo{\"e}s~\cite{Dreauetal2010}.
A conceptionally different unfitted approach are Cut Finite Elements (CutFEM)~\cite{HansboHansbo2002, BurmanHansbo,hansbo2014cut, CutFEM, Zahedi2017}. Here the main difficulty lies in the construction of quadrature formulas to represent the interface in the cut cells. Possibilities to obtain higher-order approximations include the definition of parametric mappings in the cut cells~\cite{Lehrenfeld1, Lehrenfeld2, Lehrenfeld3} or a boundary value correction based on Taylor expansion~\cite{burman2018cutmoc}. Unfitted discontinuous Galerkin methods within the CutFEM paradigm have been developed in~\cite{FidkowskiDarmofal2007, BastianEngwer2009, Massjung2012}. Areias \& Belytschko noted that CutFEM and the discontinuous variant of the XFEM approach are in fact closely related~\cite{AreiasBelytschko2006}.
A further unfitted approach that circumvents the problem to construct quadrature formulas is the shifted boundary method~\cite{MainScovazzi2018}, where interface conditions are formulated on neighbouring edges instead of the interface $\Gamma$.}

For further \textit{fitted} finite element methods, we refer to
\cite{Babuska1970, BastingPrignitz2013,
BrambleKing1996, FeistauerSobotikova1990, Zenisek1990}. 
Some works are similar to the \textit{locally modified finite element methods} in the sense that only mesh elements close to the
interface are altered~\cite{Boergers1990, XieItoLiToivanen2008}. \textit{Fitted} methods with higher order approximations have been
developed by Fang~\cite{Fang2013} {and by Omerovi\'c \& Fries~\cite{OmerovicFries}. Recently, a method called \textit{Universal Meshes} gained certain popularity~\cite{RangarajanLew}. Here the idea is to construct a suitable mapping for the elements in the interface region to resolve the interface.}\\

After this introduction, we describe the locally modified high order finite element approach and show a maximum angle 
condition in Section~\ref{sec.lmfem}. In Section~\ref{sec.apriori}, we derive the main results of this work, namely a priori error estimates in the $L^2$- and in a {discrete} energy norm. 
Section~\ref{sec.impl} gives some details on the implementation
 and in Section~\ref{sec.num}, we show different numerical examples. The conclusion of our work follows in Section $6$.

\section{Locally modified high order finite element method}
\label{sec.lmfem}
In this section we review the first order approach proposed by Frei and Richter \cite{Frei1} and extend it to a second order discretization. The splitting into subelements that
we propose is slightly different from the one presented in \cite{Frei1} and leads in general to a better bound for the maximal angles within the triangles.

Let ${\cal T}_P$ be a form and shape-regular quadrilateral mesh of the {convex domain $\Omega$ with polygonal boundary}. {The elements $P\in {\cal T}_P$ are called patches} and these do not necessarily resolve the partitioning. {By a slight abuse of notation, we will in the following write $P$ both for the elements of the triangulation and for the domain spanned by a patch $P$.} The interface $\Gamma$ may cut the patches. In this case we make {the} assumption:
\begin{assumption}[Interface configuration]\label{ass.allowed}\hfill
  \begin{enumerate}
  \item Each patch $P\in {\cal T}_P$ is either cut $P\cap \Gamma \neq {\emptyset}$ or not cut $P\cap \Gamma ={\emptyset}$. If it is cut, the cut goes through exactly two points on the boundary $\partial P$, 
    see Figure \ref{fig:patch} (left and top right).
  \item The interface does not cut the same edge multiple times and may not enter and leave the patch at the same edge, see Figure \ref{fig:patch} (bottom right).
  \end{enumerate}
\end{assumption}
{Given a smooth interface $\Gamma$, this assumption is fulfilled after sufficient refinement.}
The patch mesh ${\cal T}_P$ is the fixed background mesh used in the parametric finite element method described below. We will introduce a further local refinement of the mesh, denoted ${\cal T}_h$, {which resolves} the interface. However, this refined mesh is only for illustrative purpose. The numerical realization is based on the fixed mesh ${\cal T}_P$ and the "refinement" is in fact only included in a parametric way within the reference map for each patch $P\in {\cal T}_P$. 

\begin{figure}[t]
    \centering
  \includegraphics[width=0.28\textwidth]{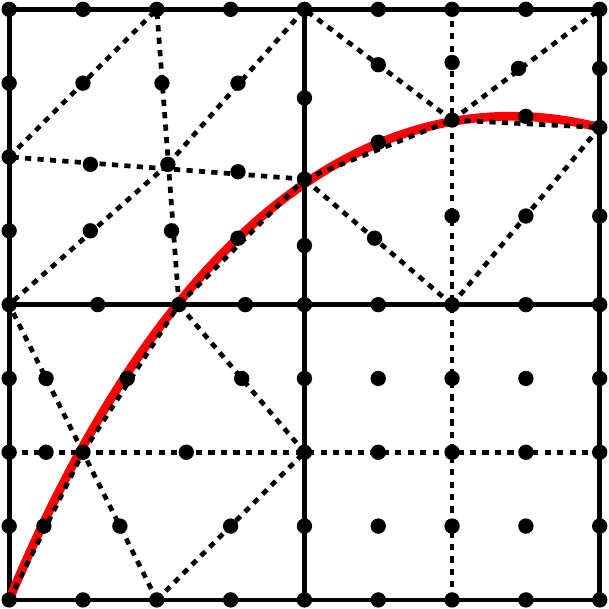}
  \hspace{0.05\textwidth}
  \includegraphics[width=0.52\textwidth]{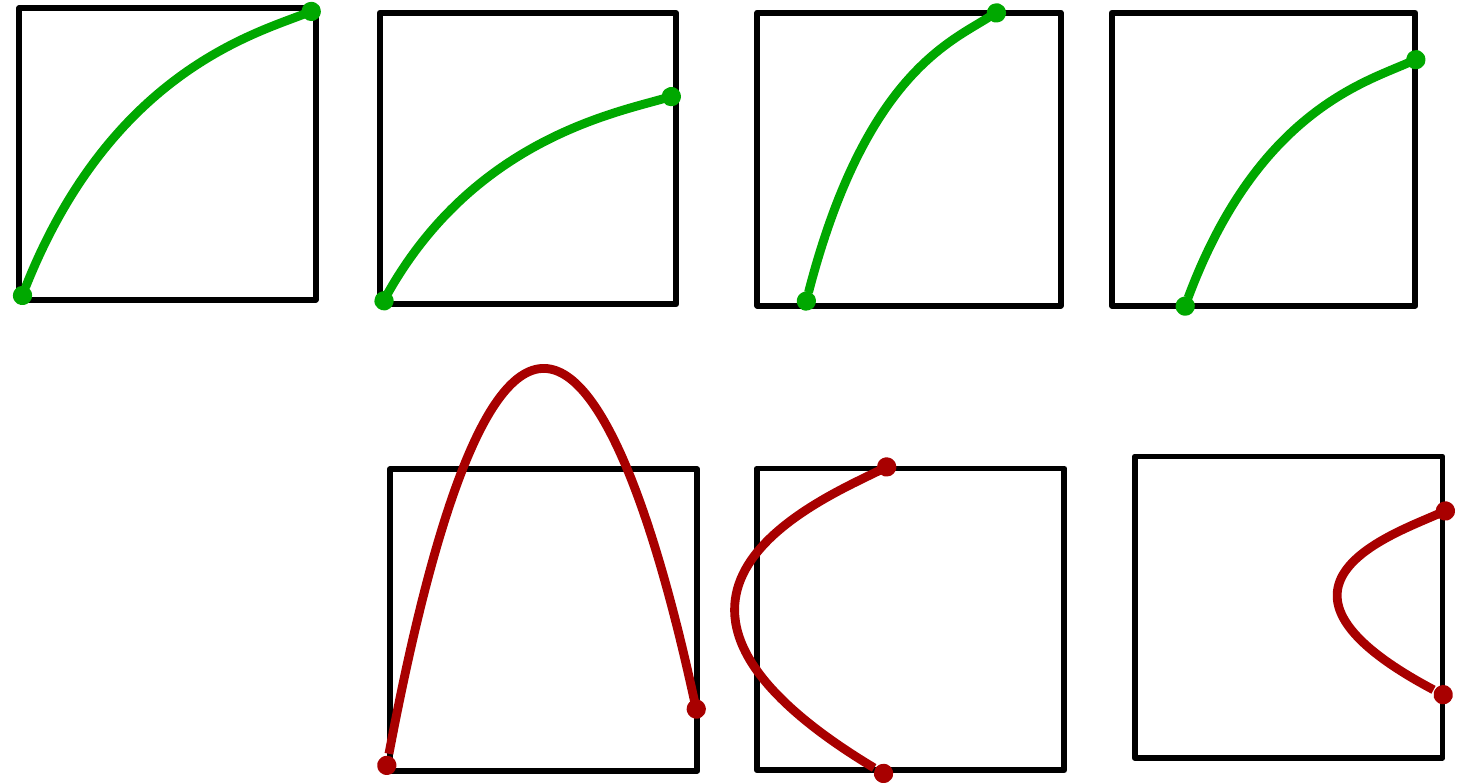}

  \caption{\textit{Left}: Mesh consisting of four patches, three of which are cut by the interface. \textit{Right}: possible configurations (top), and configurations that are not allowed (bottom).}
  \label{fig:patch}
\end{figure}

If the interface is matched by one of the edges of the patch, then the patch is considered as not cut. We will split such patches into four quadrilaterals. 
If the interface cuts the patch, then the patch splits either in eight triangles or in four quadrilaterals.
In both {cases}, the patch $P$ is first split into four quadrilaterals, {denoted by $K_1,\ldots,K_4$}, which are then possibly refined into two triangles each. {The resulting sub-cells are denoted by $T_1,\ldots, T_8$ in the case of triangular sub-cells and by $T_1,\ldots,T_4$ in the case of quadrilaterals (In the latter case these are identical to $K_1,\ldots K_4$).} This two-step procedure will simplify the following proofs. 
We define the isoparametric finite element space ${\tilde{V}}_h\subset H_0^1(\Omega)$
\begin{equation}\label{modFEspace}
  {\tilde{V}_h}:=\{\varphi\in C(\Omega) \,\arrowvert\, {(\varphi \circ {\xi_T})\in \mathcal{P}^r_T(\hat{T})\; \text{for} \; T}\in {\cal T}_h\}, 
\end{equation}
where 
\[
{\mathcal{P}^r_T(\hat{T}):= 
\begin{cases} 
  Q_r(\hat{T}), \; \; T \; \; \text{is a quadrilateral},\\
  P_r(\hat{T}), \; \; T \; \; \text{is a triangle},\\                        
\end{cases} }
\]
and {$\xi_T\in \mathcal{P}^1_T(\hat{T})$} is a transformation from the reference element {$\hat{T}$ to $T$}.
The space {$\tilde{V}_h$} is continuous, as the restriction of a function in {$Q_r(\hat{T})$} to a line $e\subset{\partial \hat{T}}$ is in $P_r({\hat{T}})$.

\subsection{Maximum angle condition}
In order to show optimal-order error estimates, the finite element mesh needs to fulfill a maximum angle condition in a fitted finite element method.
We first analyse the maximum angles of the subtriangles in a Cartesian patch grid ${\cal T}_P$. A bound for a general regular patch mesh can be obtained by using 
the regularity of the patch mesh.

\subsubsection{Linear interface approximation}

\begin{figure}[h!]
  \centering
  \includegraphics[trim=50mm 120mm 10mm 20mm,clip,width=\textwidth]{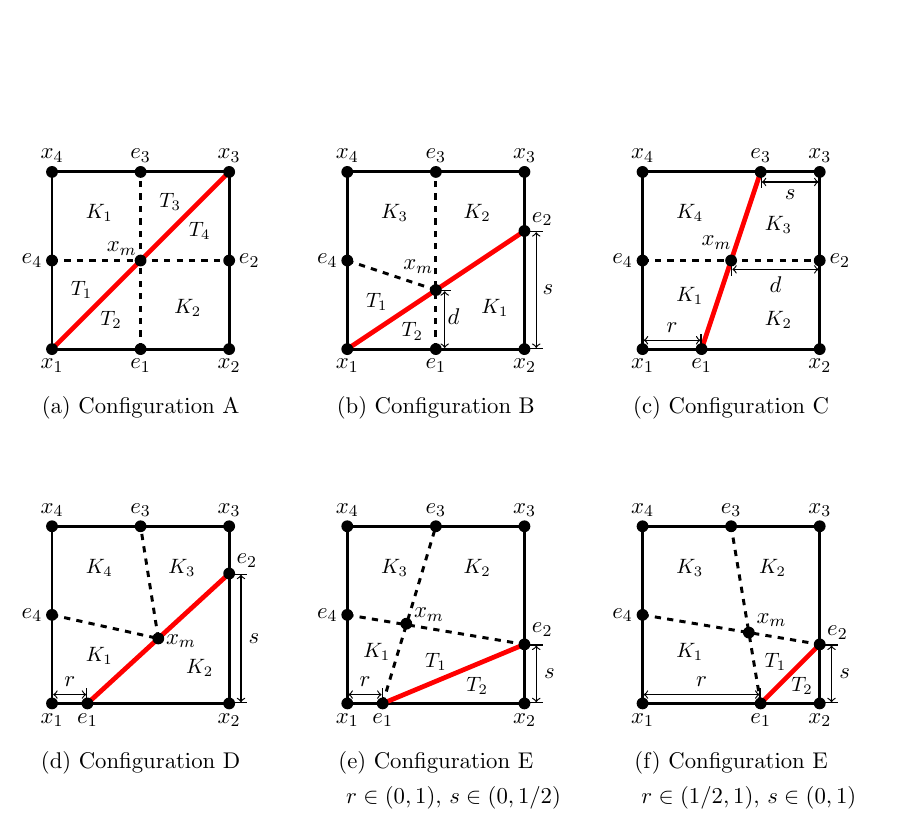}
  \caption{Different configurations and splitting into four large quadrilaterals $K_1,...,K_4$. The red line shows a linear approximation of 
    the interface. In quadrilaterals that are not split by the interface, we divide in such a way into subtriangles, that the largest angle is split. {Note in particular that the degenerate quadrilateral $K_2$ in Configuration D will be split into two regular subtriangles that are used in the definition of finite element spaces.}
    \label{fig:config}  }
\end{figure}

We distinguish {between} five different types of interface cuts by the fact that the interface intersects a patch either in 1 or 2 exterior vertices (Config. B and A) or two opposite (C) or adjacent (D and E) edges, see Figure~\ref{fig:config}.
{Let $r,s \in(0,1)$ denote the relative cut positions on an edge $e$ (see Figure~\ref{fig:config}).  
In the case of adjacent edges, we distinguish} further between the case that $r\leq \frac{1}{2}$ and $s\geq \frac{1}{2}$ (D) and the case that one these inequalities is violated (E).
In all cases the patch element can be split in four large quadrilaterals 
$K_1,...,K_4$ first, which are then {divided} into two sub-triangles, if the interface cuts through the patch. Details are given in the appendix.

Considering arbitrary interface positions, anisotropic elements can arise, when the relative cut position $r,s \in(0,1)$ on an edge $e$
tends to $0$ or $1$ (see Figure~\ref{fig:config}). 
We can not guarantee a minimum angle condition for the sub-triangles, but we can ensure that the maximum 
angles remain bounded away from $180^\circ$.

\begin{lemma}[Linear approximation of the interface]
  \label{linear}
  All interior angles of the Cartesian patch elements shown in Figure \ref{fig:config} are bounded by $135^{\circ}$ independently of the parameters $r,s \in(0,1)$.
\end{lemma}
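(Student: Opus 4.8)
The plan is to reduce the assertion, configuration by configuration, to a finite list of elementary inequalities in the cut parameters and to verify each of them. Since Figure~\ref{fig:config} draws the patch as the unit square, I would work throughout with $\hat P=[0,1]^2$; a general regular patch is then covered by the regularity of the reference map $T$, as stated just before the lemma, and need not be treated here. For each of the five configurations $A$--$E$ the appendix prescribes the four auxiliary points that split $\hat P$ into the quadrilaterals $K_1,\dots,K_4$, and, for every $K_j$ that is subdivided further, the diagonal along which it is cut into two triangles: a $K_j$ met by the linearly approximated interface is split along the diagonal that follows the interface chord (the auxiliary points being chosen precisely so that this chord is a diagonal of $K_j$), and a $K_j$ not separated by the interface is split along the diagonal through its vertex of largest interior angle. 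In coordinates, the vertices of every resulting sub-element are explicit functions of the relative cut positions $r,s\in(0,1)$ of Figure~\ref{fig:config}, and all sub-elements are non-degenerate for $r,s\in(0,1)$, so their interior angles are well defined.

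The second step is to turn the inequality $\theta\le135^\circ$ for an interior angle into a polynomial condition. If $B$ is a vertex of a sub-element with neighbouring vertices $A$ and $C$, then $\cos\theta = \bigl((A-B)\cdot(C-B)\bigr)\big/\bigl(|A-B|\,|C-B|\bigr)$; since $\cos$ is decreasing on $[0^\circ,180^\circ]$ with $\cos135^\circ=-1/\sqrt2$, the bound $\theta\le135^\circ$ is equivalent to
\[
  (A-B)\cdot(C-B)\ge 0 \qquad\text{or}\qquad \Bigl(\,(A-B)\cdot(C-B)<0 \ \text{ and }\ 2\bigl((A-B)\cdot(C-B)\bigr)^2\le |A-B|^2\,|C-B|^2\,\Bigr).
\]
Both alternatives are polynomial inequalities of low degree in $r,s$, so for every sub-element and each of its vertices the claim reduces to checking such an inequality on $(0,1)^2$, or on an interval in the configurations with a single free parameter. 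Many of these are immediate: a number of the angles equal $90^\circ$ exactly (unmodified corners of $\hat P$) and several others are independent of $r$ or $s$.

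In the third step I would discharge the remaining inequalities one configuration at a time. The genuinely delicate cases are $D$ and $E$, where the interface meets two adjacent edges: a careless choice of diagonals there produces a triangle whose flattest angle tends to $180^\circ$ as $r\to0$ or $s\to1$, and the reason the bound $135^\circ$ survives is exactly the case distinction $r\le\tfrac12\le s$ versus its negation, which in each regime selects the diagonal keeping all angles below $135^\circ$; I expect this bookkeeping, together with the anisotropic limits $r,s\to0,1$ in configurations $C$, $D$ and $E$, to be the main obstacle. For the quadrilaterals $K_j$ and the triangles obtained from them one also has to verify, from the explicit coordinates of the auxiliary points, that the angles inherited from $K_j$ --- in particular the two not lying on the cutting diagonal --- as well as the two new angles created on that diagonal all remain below $135^\circ$. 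Collecting the finitely many verified inequalities over all five configurations yields the uniform bound $135^\circ$ and completes the proof.
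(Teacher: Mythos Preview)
Your proposal is correct and follows essentially the same route as the paper's proof (Appendix~B): a configuration-by-configuration verification on the unit patch using the dot-product formula for $\cos\theta$, with the nontrivial work concentrated in configurations~$D$ and~$E$. The paper gains a small economy you do not exploit, namely that whenever a quadrilateral $K_j$ is split along a diagonal all three angles of each resulting triangle are bounded by the interior angles of $K_j$, so one may first bound the quadrilateral angles and only compute triangle angles directly where a quadrilateral angle reaches or exceeds $135^\circ$ (this happens at $x_m$ in $D$ and $E$); otherwise the two arguments coincide.
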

\begin{proof}
  The proof follows by basic geometrical considerations, see Appendix B.
\end{proof}

\begin{theorem}
  \label{max_angle}
  We assume that the patch grid ${\cal T}_P$ is Cartesian. 
  For all types of interface cuts (see Figure~\ref{fig:config}), the interior angles of all subelements are 
  bounded by $135^{\circ}$ independently of the parameters $r,s \in(0,1)$. 
\end{theorem}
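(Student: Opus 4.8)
The plan is to reduce the statement to Lemma~\ref{linear}, which already bounds the interior angles of the four large quadrilaterals $K_1,\dots,K_4$, together with one elementary observation about triangulating a convex quadrilateral. By construction, in each configuration A--E of Figure~\ref{fig:config} the patch $P$ is decomposed in the two-step procedure described above, so that every subelement is either one of the quadrilaterals $K_1,\dots,K_4$ --- for which the bound $135^\circ$ is exactly the assertion of Lemma~\ref{linear} --- or one of the two triangles into which some $K_j$ is cut along one of its diagonals. (A patch that is not cut by $\Gamma$ is split into sub-rectangles and is trivial.) Thus it only remains to treat the triangular subelements.

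So let $K_j = ABCD$ be one of the quadrilaterals. By Lemma~\ref{linear} all its interior angles are $\le 135^\circ < 180^\circ$, so $K_j$ is convex and both of its diagonals lie in its interior; fix the one used for the split, say $AC$, producing the triangles $ABC$ and $ACD$. The angle of $ABC$ at $B$ and the angle of $ACD$ at $D$ equal the interior angles of $K_j$ at $B$ and at $D$, hence are $\le 135^\circ$. The four remaining triangle angles, at $A$ and at $C$, are the sub-angles into which the diagonal $AC$ divides the interior angles of $K_j$ at $A$ and at $C$: by convexity the ray $AC$ runs between the two edges of $K_j$ meeting at $A$ (respectively at $C$), so each sub-angle is bounded by the corresponding full interior angle and therefore by $135^\circ$. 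This reasoning is independent of which diagonal is chosen, and in particular covers the rule used in Figure~\ref{fig:config} of splitting a largest angle of $K_j$; moreover, since the interface is realised as an edge of the quadrilaterals and never as a diagonal, the triangulation still resolves $\Gamma$.

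I expect the genuine work to sit entirely in Lemma~\ref{linear}, i.e.\ in the geometric case distinction over the configuration types A--E and over the relative cut positions $r,s\in(0,1)$ (Appendix~B); the step carried out here, from the quadrilaterals to their sub-triangles, is the elementary argument above. The main point still to be checked is that the two-step construction genuinely produces no subelement outside the two classes ``some $K_j$'' and ``a diagonal half of some $K_j$'', so that the case analysis is exhaustive for every admissible interface position.
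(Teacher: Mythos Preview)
Your overall strategy---reduce to Lemma~\ref{linear}---is the paper's, but you have misread what that lemma delivers. Lemma~\ref{linear} (look at its proof in Appendix~B) bounds the interior angles of the final \emph{sub-triangles}, not of the intermediate quadrilaterals $K_1,\dots,K_4$. With that reading the paper's proof of Theorem~\ref{max_angle} is one line: Lemma~\ref{linear} gives the $135^\circ$ bound on the reference patch, and since the grid is Cartesian the map to each $P\in{\cal T}_P$ preserves angles.

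Your added quadrilateral-to-triangle step is not only redundant but actually fails. In configuration~D one of the four quadrilaterals ($K_2$ in Appendix~B) is degenerate, with an interior angle of exactly $180^\circ$ at $x_m$; in configuration~E the quadrilateral angles at $x_m$ can reach $144^\circ$. Hence the premise ``By Lemma~\ref{linear} all its interior angles are $\le 135^\circ$, so $K_j$ is convex'' is false in both cases, and your sub-angle argument does not go through. Appendix~B handles precisely these configurations by choosing the diagonal so as to split the offending large angle and then bounding the resulting triangle angles directly (e.g.\ computing $\angle e_1 x_m x_2\in(53^\circ,127^\circ)$ in~D). That is why the rule ``split the largest angle'' is essential and why the $135^\circ$ bound for the triangles cannot be deduced from a uniform $135^\circ$ bound on the $K_j$.
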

\begin{proof}
  By means of Lemma~\ref{linear} all interior angles on the reference patch are bounded by $135^{\circ}$. As all cells are Cartesian, the same bound holds for the elements {$T\subset P$} .
\end{proof}

\begin{remark}
  We have assumed for simplicity that the underlying patch mesh is fully
  Cartesian. This assumption can, however, easily be weakened. Allowing
  more general form- and shape-regular patch meshes a geometric transformation of each patch
  to the unit patch will give a bound $\alpha<\alpha_{\max}<180^\circ$ for the interior angles $\alpha$ (with $\alpha_{\max}$ larger than $135^\circ$). 
\end{remark}

\subsubsection{Quadratic interface approximation}

Next, we define a quadratic approximation of the interface. In each of the subtriangles obtained in the previous paragraph, we consider 6 degrees of freedom 
that lie on the vertices and edge midpints of the triangles (see the dots in Figure~\ref{fig:patch}, left). In order to guarantee a higher-order interface approximation
those that lie on the discrete interface $\Gamma_h$ need to be moved. The detailed algorithm is given in Section~\ref{sec.impl}.

\begin{figure}[t]
  \centering
  \includegraphics[width=0.8\textwidth]{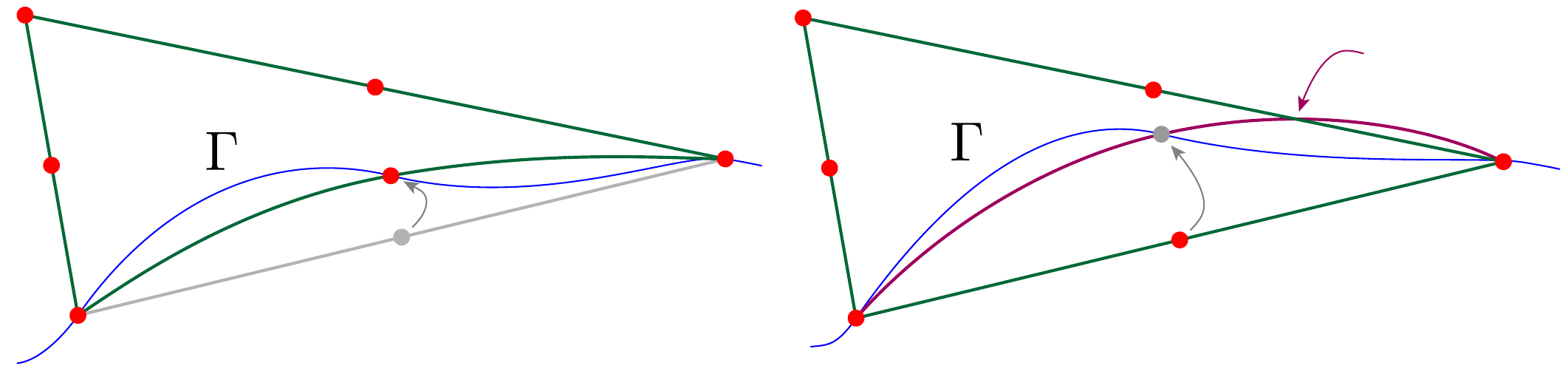}
  \caption{Each of the eight triangles approximates the
    interface $\Gamma$ quadratically. {Herefore}, the midpoint of the
    edge that corresponds to the interface is pulled onto the curve.
    {\textit{Left}}: This is a valid configuration where a quadratic
    approximation is possible. \textit{Right}: In some configurations
    a fully quadratic interface approximation would
    result in a degenerate element with an interface that is leaving
    triangle (see the mark on the upper edge). Such triangles are approximated linearly and cause $n_l>0$.}
  \label{fig:config-step2}
\end{figure}

%
%
In certain ``pathological'' situations we can not guarantee that the angle conditions imposed above are fulfilled. This is due to the fact that the curved edges that correspond to a quadratic interface approximation might intersect other edges, 
see Figure~\ref{fig:config-step2} for an example.

In this case, we use a linear approximation of the interface in the affected patch {(i.e.\,the reference maps $\xi_T$ in the finite element space~\eqref{modFEspace} are linear). We denote the set of patches, where a linear interface approximation is used by ${\cal T}_{P,\text{lin}}$ and the corresponding set of sub-cells with a linear reference map $\xi_T$ by ${\cal T}_{h,\text{lin}}$.} We will see in the numerical examples below that this happens rarely. Moreover, it is reasonable to assume that the maximum number of such patches remains bounded under refinement independently of $h\le h_0$. 

{We give a heuristic argument for this assumption. Let us consider the situation sketched in Figure~\ref{fig:config-step2}. The linear approximation of the interface will never leave the patch by definition. The maximum distance between a linear and quadratic interface approximation is bounded by ${\cal O}(h_P^2)$. In relation with the patch size ${\cal O}(h_P)$ this means that -considering arbitrary interface positions- the probability that the quadratic interface approximation leaves the patch is bounded by ${\cal O}(h_P)$. The number of interface patches, on the other hand, grows like ${\cal O}(h_P^{-1})$. Hence it is reasonable to assume that the number of affected patches behaves like ${\cal O}(1)$ for $h_P\to 0$.}
We will denote the maximum number of patches with a linear interface approximation by $n_l$.


\subsection{Modified spaces and discrete variational formulation}

We {define the finite element space}
\begin{equation}
  V_h:=\{\varphi\in C(\Omega) \,\arrowvert\, {(\varphi \circ {\xi_T})\in \mathcal{P}^r_T(\hat{T})\; \text{for} \; T}\in {\cal T}_h\}, 
\end{equation}
where the map {$\xi_T$} resolves the interface with order $r$ in all but $n_l$ elements, where the approximation is only linear:
{\begin{align*}
\xi_T \in \begin{cases}
{\cal P}_T^1(\hat{T}), \qquad &T\in {\cal T}_{h,\text{lin}},\\
{\cal P}_T^r(\hat{T}), \qquad &\text{else}.
\end{cases}
\end{align*}}
 The polynomial 
order of the trial functions $(\varphi \circ {\xi_T})$ is $r$ independent of the interface approximation.

We consider a $C^{3}$- parameterized interface $\Gamma$, which is not matched by the triangulation ${\cal T}_h$. The triangulation induces a discrete interface $\Gamma_h$, which 
is a quadratic (and in max.$\,n_{l}$ elements a linear) approximation to $\Gamma$. The discrete interface splits the triangulation in subdomains $\Omega_h^1$ and 
$\Omega_h^2$, such that each {subcell $T\in {\cal T}_h$} is either completely included in $\Omega_h^1$ or in $\Omega_h^2$. 

We consider the following discrete variational formulation: \emph{Find $u_h\in V_h$ such that}
\begin{align}\label{DiscVar}
  a_h(u_h, \phi_h) = (f_h, \phi_h)_{\Omega} \quad \forall \phi_h \in V_h,  
\end{align}
where we set $f_h|_{\Omega_h^i} := f_i, i=1,2$ and $f_i$ is a smooth extension of $f|_{\Omega_i}$ to $\Omega_h^i$. The bilinear form is given by 
\begin{align*}
  a_h(u_h,\phi_h): = (\nu_h \nabla u_h,\nabla \phi_h)_{\Omega},
\end{align*}
where $\nu_h$ is defined by
\begin{align*}
  \nu_h = \begin{cases}
    \nu_1, & \vec{x}\in \Omega_h^1\\
    \nu_2, & \vec{x}\in \Omega_h^2.
  \end{cases} 
\end{align*}

{
\begin{remark}\label{rem.3d}
The locally modified finite element method has straight-forward extensions to 3 space dimensions. One possibility is to use a hexahedral patch mesh, where each hexahedron is subdivided into 8 sub-hexahedra. The hexahedra affected by the interface are then further subdivided into 6 tetrahedra to resolve the interface. 4 different types of cuts have to be considered, based on the number of patch vertices that remain on each side of the interface (1 vs.\,7, 2 vs.\,6, 3 vs.\,5 or 4 vs.\,4). In order to guarantee a maximum angle condition in pathological situations some of the patch vertices can be moved, if necessary. Such an approach has been implemented by Langer \& Yang, see~\cite{LangerYang}. Alternatively, the patches can also be subdivided into polyhedral sub-elements as in H\"ollbacher \& Wittum~\cite{HoellbacherWittum, Vogeletal2013}. For all variants the ideas as well as the analysis presented in this paper have a straight-forward extension to the three-dimensional method.
\end{remark}}

\section{A priori error analysis}
\label{sec.apriori}

Let $h_P$ be the maximum size of a patch element $P\in {\cal T}_{P}$ of the regular patch grid. We will denote the mismatch between $\Omega_h^i$ 
and $\Omega^i$ by $S_h^i$, $i=1,2$ (see Figure \ref{fig:S_h})
\begin{align*}
  S_h^1 &:= \Omega_h^1 \setminus \Omega_1 = \Omega_2 \setminus \Omega_h^2,\\
  S_h^2 &:= \Omega_h^2 \setminus \Omega_2 = \Omega_1 \setminus \Omega_h^1.
\end{align*}
Moreover, we denote the set of elements $T\in {\cal T}_h$ that contain parts of $S_h^i$ by 
\begin{align*}
  S_T^i &:= \{T\in {\cal T}_h\, | \, T \cap S_h^i \neq 0\},\qquad  S_T:= S_T^1 \cup S_T^2.
\end{align*}
Further, we split $S_h^i$ into parts $S_{h,{\text{lin}}}^i$ with a linear approximation of the interface and parts 
${S_{h,{\text{qu}}}^i}$ with a quadratic approximation.  
Finally, by a slight abuse of notation, we will use the same notation, e.g. $S_T, S_T^i$,
for the region that is spanned by the union of all {elements} in these sets.

{By constants $c$ we will denote in the following generic constants that are independent of the mesh size $h_P$, the position of the interface, the solution $u$ and the number of linearly approximated elements $n_l$ {(but may depend on the parameters $\nu_i$, $i=1,2$).} We note that within a sequence of inequalities, $c$ might even represent different values on different sides of the inequalities.}
\begin{figure}[t]
    \centering
  \includegraphics[trim=50mm 130mm 50mm 40mm,clip,width=0.45\textwidth]{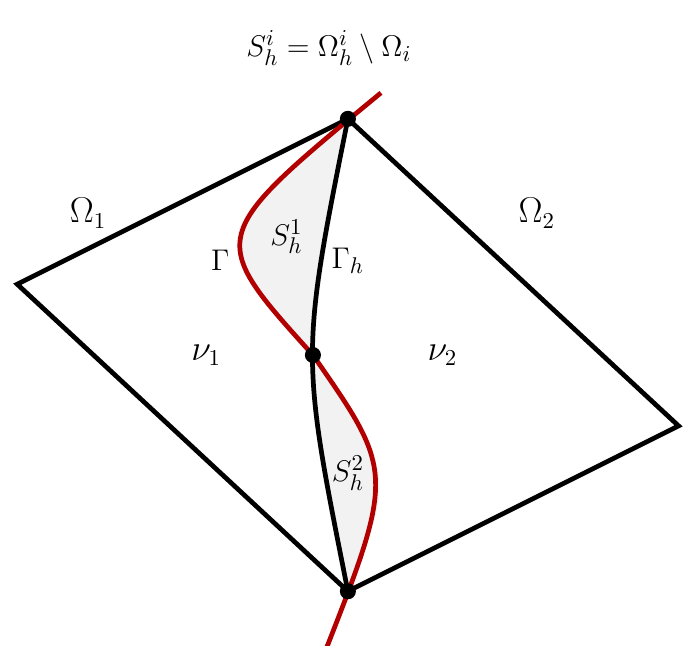}
  \caption{Mismatch between $\Omega^i$ and $\Omega_h^i$, $i=1,2$ at
    two elements along the curved interface. }
  \label{fig:S_h}
\end{figure}
\subsection{Auxiliary estimates}

We begin with some technical estimates that will be needed in order to control the mismatch between continuous and discrete 
bilinear forms. {To this purpose we will need the following Sobolev imbedding for $2\leq p<\infty$
\begin{align}\label{Sobolev}
\|u\|_{L^p(\Omega)} \leq c p^{\frac{1}{2}} \|u\|_{H^1(\Omega)},
\end{align}
which is valid with a constant $c$ independent of $p$, see~\cite{Tanakaetal2015}. We will need the following technical result

\begin{lemma}\label{lem.Jp}
Let $\alpha\in \mathbb{N}, \,h\in \mathbb{R}_+$ and $J(p) := h^{-\frac{\alpha}{p}} p^{\frac{1}{2}}$. It holds that
\begin{align*}
\min_{p\in [2,\infty]} J(p) \leq c |\ln(h)|^{\frac{1}{2}}.
\end{align*}
\end{lemma}
\begin{proof}
The necessary condition for a local minimum is 
\begin{align*}
J'(p) &= h^{-\frac{\alpha}{p}} \ln(h) \frac{\alpha}{p^2} p^{\frac{1}{2}} +\frac{1}{2} h^{-\frac{\alpha}{p}} p^{-\frac{1}{2}} 
= h^{-\frac{\alpha}{p}} p^{-\frac{1}{2}} \left( \ln(h) \frac{\alpha}{p} +\frac{1}{2} \right) \overset{!}{=}0,
\end{align*}
which yields $p=-2\alpha \ln(h)$. The minimum value is 
\begin{align*}
J(-2\alpha \ln(h)) = -e^{-\frac{1}{2}} \sqrt{2\alpha} \ln(h)^{1/2}.
\end{align*}
The fact that $\lim\limits_{p\to \infty}J(p)=\infty$ and $J(2) > J(-2\alpha \ln(h))$ show that the local minimum is in fact a global one.
\end{proof}

\noindent The following lemma will be needed to estimate the mismatch between continuous and discrete bilinear forms.}

\begin{lemma}[Geometry Approximation]\label{lem.Sh}
  Let $T\in S_T$ and let $s$ be the local approximation order of the interface, i.e. 
  \begin{equation}\label{dist}
    \text{dist }(\Gamma_h\cap T; \Gamma\cap T) \leq ch_P^{s+1}.
  \end{equation}
  If the number of elements with a linear interface approximation is bounded by $n_l$, it holds for the areas of the regions $S_{h,lin}$ and $S_{h,qu}$ that
  \begin{align}
    |S_{h,lin}| \leq n_l h_P^3, \quad |S_{h,qu}| \leq h_P^3.\label{Shest}
  \end{align}
  For $u\in H^1(\Omega_1\cup\Omega_2)$ and $\phi_h\in {\cal V}_h$ we have the bounds
  \begin{align}
    \|\nabla \phi_h\|_{S_h\cap T} &\leq ch_P^{\frac{s}{2}}\|\nabla \phi_h\|_{T}\label{DiscShi}\\ 
    \| u\|_{S_h\cap T} &\leq ch_P^{\frac{s+1}{2}} \|u\|_{\Gamma\cap T} + ch_P^{s+1} \|\nabla u\|_{S_h \cap T}. \label{L2u}
  \end{align}
  Moreover, we have for $u\in H^1(\Omega_1\cup\Omega_2)$ and $v\in H^2(\Omega_1\cup\Omega_2)$
  \begin{align}
    \|u\|_{S_{h,\text{lin}}} \leq ch_P\|u\|_{H^1(\Omega_1\cup\Omega_2)},\qquad
    \|u\|_{S_{h,\text{qu}}} \leq ch_P^{\frac{3}{2}}\|u\|_{H^1(\Omega_1\cup\Omega_2)}\label{fullqu}
  \end{align} 
  and
  \begin{align}
    {\|u\|_{S_{h,\text{lin}}} \leq c n_l^{\frac{1}{2}} h_P^{\frac{3}{2}} |\ln(h)|^{1/2} \|u\|_{H^1(\Omega_1\cup\Omega_2)}},
    \qquad  \|v\|_{S_{h,\text{lin}}} \leq cn_l^{\frac{1}{2}} h_P^{\frac{3}{2}}\|v\|_{H^2(\Omega_1\cup\Omega_2)}.   
    \label{hp32eps}
  \end{align}
  For functions $u\in H^1_0(\Omega)$ the $H^1$-norm on the right-hand side of \eqref{fullqu} and \eqref{hp32eps} can be replaced by the $H^1$-seminorm.      
\end{lemma}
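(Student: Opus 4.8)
The plan is to prove the six assertions in the order in which they build on one another: the elementary area bounds \eqref{Shest} first, then the discrete gradient estimate \eqref{DiscShi}, then the trace-type bound \eqref{L2u}, and finally \eqref{fullqu}, \eqref{hp32eps} and the seminorm remark, all of which follow from \eqref{L2u} by summation combined with the trace theorem and Sobolev embeddings. For \eqref{Shest} I would argue geometrically. Fix $T\in S_T$ and let $e_h:=\Gamma_h\cap\overline T$ be the (straight or quadratically curved) interface edge contained in $T$. By Assumption~\ref{ass.allowed} the interface enters and leaves each patch exactly once, so $|\Gamma_h\cap\overline P|\le c\,h_P$ for every cut patch $P$; together with \eqref{dist}, the mismatch $S_h\cap T$ is a tube of transverse thickness at most $c\,h_P^{s+1}$ around $e_h$, hence $|S_h\cap T|\le c\,h_P^{s+1}\,|e_h|$. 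Summing over the boundedly many sub-elements of a patch gives $|S_h\cap P|\le c\,h_P^{s+2}$; inserting $s=1$ on the at most $n_l$ patches carrying a linear approximation and $s=2$ elsewhere, and using $\sum_P|\Gamma_h\cap\overline P|=|\Gamma_h|\le c$, summation over all cut patches yields \eqref{Shest} after absorbing the fixed constant.

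For \eqref{DiscShi} I would combine this thickness bound with an inverse estimate. Since $\phi_h\circ T^{-1}$ is a polynomial on the reference element and the maximum-angle condition of Theorem~\ref{max_angle} holds, the inverse inequality $\|\nabla\phi_h\|_{L^\infty(T)}^2\le c\,|T|^{-1}\|\nabla\phi_h\|_{L^2(T)}^2$ is available with a constant uniform in the (possibly anisotropic) shape of the sub-elements. Hence $\|\nabla\phi_h\|_{S_h\cap T}^2\le |S_h\cap T|\,\|\nabla\phi_h\|_{L^\infty(T)}^2\le c\,\frac{|S_h\cap T|}{|T|}\,\|\nabla\phi_h\|_T^2$, so it only remains to verify $|S_h\cap T|\le c\,h_P^{\,s}\,|T|$. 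This is the one step where the fine structure of the splitting really enters: one must rule out thin sub-triangles having the interface edge as their \emph{long} edge, which is exactly what the construction of the sub-elements in Section~\ref{sec.lmfem} and the appendix guarantees (together with the fall-back to a linear interface on the at most $n_l$ patches). I expect this area-ratio bound to be the main obstacle; once \eqref{dist} and the maximum-angle condition are available the remaining steps are routine.

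For \eqref{L2u} I would use a one-dimensional fundamental theorem of calculus in the normal direction. The region $S_h\cap T$ lies entirely on one side of $\Gamma$, where $u$ is $H^1$; for $x\in S_h\cap T$ let $\pi(x)\in\Gamma\cap\overline T$ be its nearest point on the exact interface, so that, for $h_P$ small enough that $S_h\cap T$ lies inside a tubular neighbourhood of $\Gamma$, $u(x)=u(\pi(x))+\int_0^{\operatorname{dist}(x,\Gamma)}\partial_n u$. Squaring, using $\operatorname{dist}(x,\Gamma)\le c\,h_P^{s+1}$ from \eqref{dist}, and passing to the tubular coordinates $(\pi(x),\operatorname{dist}(x,\Gamma))$ — whose Jacobian is bounded from above and below since $\Gamma\in C^3$ has bounded curvature — one integrates out the transverse variable and obtains \eqref{L2u}. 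This step uses no shape regularity of $T$.

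Finally, squaring \eqref{L2u} and summing over all $T\in S_{T,\mathrm{lin}}$ and over all $T\in S_{T,\mathrm{qu}}$, the boundary terms combine to $\|u\|_\Gamma^2\le c\,\|u\|_{H^1(\Omega_1\cup\Omega_2)}^2$ by the trace theorem applied on each $\Omega_i$, while the gradient terms carry an extra factor $h_P^{2(s+1)}$ and are of higher order; with $s=1$ and $s=2$ respectively this gives \eqref{fullqu}. For the sharpened bounds \eqref{hp32eps} I would not use \eqref{L2u} but instead exploit that $|S_{h,\mathrm{lin}}|\le n_l\,h_P^3$ by \eqref{Shest} is small: H\"older's inequality gives $\|u\|_{L^2(S_{h,\mathrm{lin}})}^2\le|S_{h,\mathrm{lin}}|^{\,1-2/p}\,\|u\|_{L^p(\Omega_1\cup\Omega_2)}^2$, and the two-dimensional Sobolev embedding $H^1\hookrightarrow L^p$, valid for every $p<\infty$ with embedding constant tending to infinity as $p\to\infty$, yields the first estimate upon choosing $p=3/\epsilon$; the embedding $H^2\hookrightarrow L^\infty$ used in the same way (formally $p=\infty$) gives the second, $\epsilon$-free estimate. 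For $u\in H_0^1(\Omega)$ there is no jump of $u$ across $\Gamma$, so $\|u\|_{H^1(\Omega_1\cup\Omega_2)}=\|u\|_{H^1(\Omega)}$, and the Poincar\'e inequality replaces the full norm by the $H^1$-seminorm, which gives the last claim.
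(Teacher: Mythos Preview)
Your plan is correct and matches the paper's approach: the paper simply cites \cite{RichterBook} for \eqref{Shest}--\eqref{L2u} (quoting the same Poincar\'e-type inequality you derive for \eqref{L2u}), then obtains \eqref{fullqu} by summation plus a global trace inequality and \eqref{hp32eps} via exactly your H\"older/Sobolev-embedding argument, with the final seminorm remark by Poincar\'e. Your route to \eqref{DiscShi} through an $L^\infty$ inverse estimate and the area ratio $|S_h\cap T|/|T|$ is a bit more explicit than the paper's bare citation, and you correctly flag that bounding this ratio on the anisotropic sub-triangles is the only non-routine step.
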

\begin{proof}
  Estimates {\eqref{dist},\,\eqref{DiscShi} and \eqref{L2u}} have been shown in~{\cite[Lemmas 4.32 and 4.34]{RichterBook}}\footnote{{The proof of Lemma 4.34 in~\cite{RichterBook} contains a typo. The Poincar\'e-type inequality is there given as $\|u_h\|^2_{S_h\cap T} \le c h_P^{s+1} \|u\|_{\Gamma_h\cap T}^2 + ch_P^{2s+1} \|\nabla u\|_{S_h\cap T}^2$ with the non-optimal order $2s+1$ instead of $2s+2$. The difference comes from a transmission error in the line above (4.17) in~\cite{RichterBook}. Integration of the inequality leading to (4.17) brings the factor $h_P^{s+1}$ in addition to the factor $h_P^{s+1}$, which is already present by estimation of the distance between $\Gamma$ and $\Gamma_h$. The statement of Lemma 4.34 in~\cite{RichterBook} is indeed correct and curing the proof is trivial by just correcting the typo}.} \eqref{Shest} follows from \eqref{dist} and simple geometric arguments.
  For \eqref{DiscShi} and \eqref{L2u} a Poincar\'e-type estimate is used, see \cite[Lemma 4.34]{RichterBook}
  \begin{align}\label{localest}
    \|u\|_{S_h\cap T}^2 \leq ch_P^{s+1} \|u\|_{\Gamma_h\cap T}^2 + ch_P^{2s+2} \|\nabla u\|_{S_h \cap T}^2.  
  \end{align} 
  Summation over all elements in $S_{T,\text{lin}}$ and $S_{T,\text{qu}}$, respectively, and a global trace inequality for the interface terms yields \eqref{fullqu}.
  To show \eqref{hp32eps}, we use a H\"older inequality for $p\in [2,\infty]$
    \begin{align}\label{Lpest}
      \|u\|_{S_{h,lin}} \leq  |S_{h,lin}|^{\frac{1}{2}-\frac{1}{p}} \|u\|_{L^p(S_{h,lin})}.
    \end{align}
    {Due to $|S_{h,lin}|\leq cn_l h_P^3$ and the Sobolev imbedding \eqref{Sobolev} for $\Omega=\Omega_i$ we have for arbitrary $p\in [2,\infty)$
    \begin{align}\label{fminusfh_lin}
      \|u\|_{S_{h,lin}} \leq c p^{\frac{1}{2}} n_l^{\frac{1}{2}} h_P^{\frac{3}{2}-\frac{3}{p}} \|u\|_{H^1(\Omega_1\cup\Omega_2)}.
    \end{align}
    Using Lemma~\ref{lem.Jp}, we obtain the first estimate in \eqref{hp32eps}.
} 
    If $v\in H^2(\Omega_1\cup\Omega_2)$ we can use \eqref{Lpest} for 
    $p=\infty$ due to the Sobolev imbedding $H^2(\Omega_i) \subset L^\infty(\Omega_i)$ and we obtain
    \begin{align}
      \|u\|_{S_{h,lin}} \leq cn_l^{\frac{1}{2}} h_P^{\frac{3}{2}} \|u\|_{H^2(\Omega_1\cup\Omega_2)}. 
    \end{align}

    Finally, the norms on the right-hand side can be substituted by the $H^1$-seminorm for $u\in H^1_0(\Omega)$ by means of the Poincar\'e inequality.

\end{proof}

In the following estimates the mismatch between discrete and continuous bilinear form will be the predominant issue and will lead to some technicalities in the estimates. The continuous solution $u$ is regular in $\Omega_1$ and $\Omega_2$, while its normal derivative has a jump across $\Gamma$. Discrete functions can only have irregularities at the boundaries of cells $\partial T$, which means that a discrete function can {only} resemble a similar discontinuity across the discrete interface $\Gamma_h$.

To cope with this difference, we will need a map $\pi:  H^3(\Omega_1\cup\Omega_2) \to H^3(\Omega_h^1\cup\Omega_h^2)$. To define the map, let $u\in H^3(\Omega_1\cup\Omega_2)$ and $u_i:=u|_{\Omega_i}\in H^3(\Omega_i)$ the restriction to the subdomain $\Omega_i, i=1,2$. We use smooth extensions $\tilde u_i\in H^3(\Omega)$ (i=1,2) to the full domain $\Omega$. Such an extension {exists} with the properties
  \begin{equation}\label{extension}
{\tilde{u}_i = u \quad \text{in }\Omega_i,} \qquad    
     \|\tilde u_i\|_{H^m(\Omega)}\le C
    \|u\|_{H^m(\Omega_i)}, \quad i=1,2, \quad m=2,3, 
  \end{equation}
 as the interface $\Gamma$ is smooth,
  see e.g.\,the textbook of Stein~\cite{Stein}[Section VI.3.1]. We use these extensions to define a function $\pi u \in H^3(\Omega_h^1\cup\Omega_h^2)$:
  \begin{align}\label{piu}
  \pi u{(\vec{x})} = \begin{cases}
    \tilde{u}_1{(\vec{x})},& \vec{x} \in \Omega_h^1,\\
    \tilde{u}_2{(\vec{x})},& \vec{x} \in \Omega_h^2.
  \end{cases}
  \end{align}
It should be noted that $\pi u$ can be discontinuous across $\Gamma_h$.  
  
The following estimate analyzes the difference between $u$ and $\pi u$ in the $H^1$-seminorm.
\begin{lemma}\label{lem.tildeu}
Let $u\in H^3(\Omega_1\cup\Omega_2)$, $\pi u
\in H^3(\Omega_h^1\cup\Omega_h^2)
$ the function defined by \eqref{piu}
and $n_l$ the maximum number of elements with a linear interface approximation. It holds that 
\begin{align}
\|\nabla (u-\pi u) \|_{\Omega} \leq ch_P \left(n_l^{1/2} +1\right) \|u\|_{H^2(\Omega_1\cup\Omega_2)}\label{upiu2}\\
\|\nabla (u-\pi u) \|_{\Omega} \leq ch_P^{3/2} \left(n_l^{1/2} +1\right) \|u\|_{H^3(\Omega_1\cup\Omega_2)}\label{upiu}. 
\end{align}
\end{lemma}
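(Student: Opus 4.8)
The plan is to exploit that $u-\pi u$ vanishes wherever the continuous and discrete subdomains coincide, so that the gradient to be estimated is supported only on the small mismatch region $S_h=S_h^1\cup S_h^2$, and then to feed this into the geometry-approximation bounds \eqref{fullqu}--\eqref{hp32eps} of Lemma~\ref{lem.Sh}.

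\textbf{Step 1 (localisation of the support).} By the defining property \eqref{extension} the extensions satisfy $\tilde u_i=u$ on $\Omega_i$, hence by \eqref{piu} we have $\pi u=u$ on $\Omega_h^1\cap\Omega_1$ and on $\Omega_h^2\cap\Omega_2$. Since, up to a set of measure zero, $\Omega$ is the disjoint union of $\Omega_h^1\cap\Omega_1$, $\Omega_h^2\cap\Omega_2$, $S_h^1$ and $S_h^2$, the function $\nabla(u-\pi u)$ is supported in $S_h$. On $S_h^1$ one has $x\in\Omega_2$ while $x\in\Omega_h^1$, so there $u-\pi u=\tilde u_2-\tilde u_1$; symmetrically $u-\pi u=\tilde u_1-\tilde u_2$ on $S_h^2$. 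In both cases $|\nabla(u-\pi u)|=|\nabla w|$ pointwise on $S_h$ with $w:=\tilde u_1-\tilde u_2\in H^3(\Omega)$, so, splitting $S_h=S_{h,\text{lin}}\cup S_{h,\text{qu}}$,
\[
\|\nabla(u-\pi u)\|_{\Omega}=\|\nabla w\|_{S_h}\le\|\nabla w\|_{S_{h,\text{lin}}}+\|\nabla w\|_{S_{h,\text{qu}}} .
\]
Moreover, by \eqref{extension} again, $\|\nabla w\|_{H^m(\Omega_1\cup\Omega_2)}\le c\|u\|_{H^{m+1}(\Omega_1\cup\Omega_2)}$ for $m=1,2$.

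\textbf{Step 2 (applying Lemma~\ref{lem.Sh}).} For \eqref{upiu2} I apply \eqref{fullqu} to $\nabla w\in H^1(\Omega_1\cup\Omega_2)$ on each piece, obtaining $\|\nabla w\|_{S_{h,\text{lin}}}\le ch_P\|\nabla w\|_{H^1(\Omega_1\cup\Omega_2)}$ and $\|\nabla w\|_{S_{h,\text{qu}}}\le ch_P^{3/2}\|\nabla w\|_{H^1(\Omega_1\cup\Omega_2)}$; together with the bound from Step~1 and $h_P\le h_0$ this gives $\|\nabla(u-\pi u)\|_{\Omega}\le ch_P\|u\|_{H^2(\Omega_1\cup\Omega_2)}$, which is dominated by the claimed right-hand side (the factor $n_l^{1/2}+1$ is not sharp here and is kept only for uniformity with \eqref{upiu}). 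For \eqref{upiu} I keep the quadratic part as above, $\|\nabla w\|_{S_{h,\text{qu}}}\le ch_P^{3/2}\|\nabla w\|_{H^1(\Omega_1\cup\Omega_2)}\le ch_P^{3/2}\|u\|_{H^3(\Omega_1\cup\Omega_2)}$ — this is the ``$+1$'' term — and for the linear part I use the second, $H^2$-estimate of \eqref{hp32eps} with $v=\nabla w\in H^2(\Omega_1\cup\Omega_2)$, which gives $\|\nabla w\|_{S_{h,\text{lin}}}\le cn_l^{1/2}h_P^{3/2}\|\nabla w\|_{H^2(\Omega_1\cup\Omega_2)}\le cn_l^{1/2}h_P^{3/2}\|u\|_{H^3(\Omega_1\cup\Omega_2)}$. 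Adding the two contributions yields \eqref{upiu}; the replacement of norms by seminorms for $u\in H^1_0(\Omega)$, afforded by Lemma~\ref{lem.Sh}, is not needed here.

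\textbf{Main obstacle.} There is no deep difficulty once Lemma~\ref{lem.Sh} is in hand; the point requiring care is the bookkeeping in Step~1 — correctly identifying, on each component $S_h^i$ of the mismatch, which extension realises $u$ and which realises $\pi u$ (these are interchanged between $S_h^1$ and $S_h^2$), and observing that the resulting difference is $\pm w$ with a gradient that carries no jump contribution along $\Gamma_h$. The second thing to track is the regularity budget: the sharper $n_l^{1/2}h_P^{3/2}$ rate in \eqref{upiu} rests on the $H^2$-version of \eqref{hp32eps}, which needs $\nabla w\in H^2$, i.e. $u\in H^3$; with only the $H^2$-regularity used for \eqref{upiu2} one is confined to \eqref{fullqu} on $S_{h,\text{lin}}$ and hence to the slower rate $h_P$.
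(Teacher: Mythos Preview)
Your proof is correct and follows essentially the same route as the paper: localise $u-\pi u$ to the mismatch strip $S_h$ and then feed the result into the geometry-approximation bounds of Lemma~\ref{lem.Sh}. Your introduction of the single globally smooth function $w=\tilde u_1-\tilde u_2$ is a small but clean streamlining --- the paper instead bounds $\|\nabla u\|$ and $\|\nabla\pi u\|$ separately, for \eqref{upiu} going directly via $|S_h|^{1/2}\|\nabla u\|_{L^\infty}$ and the Sobolev embedding $H^3\subset W^{1,\infty}$ rather than through the packaged estimate \eqref{hp32eps}, and for \eqref{upiu2} applying the local Poincar\'e-type bound \eqref{L2u} element by element and summing, rather than invoking \eqref{fullqu} in one shot as you do.
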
  
\begin{proof}
$u$ and $\pi u$ differ only in the small strip $S_h$ around the interface. For $u\in H^3(\Omega_i)$ we have, using the Sobolev embedding $H^3(\Omega_i)\subset W^{1,\infty}(\Omega_i)$ and the continuity of the extensions \eqref{extension}
\begin{align*}
\|\nabla (u-\pi u) \|_\Omega = \|\nabla (u-\pi u) \|_{S_h} &\leq |S_h|^{\frac{1}{2}} \left(\|\nabla u\|_{L^{\infty}(\Omega)} +\|\nabla \pi u\|_{L^{\infty}(\Omega)}\right) \\
&\leq c |S_h|^{\frac{1}{2}} \|u\|_{H^3(\Omega_1\cup\Omega_2)}.
\end{align*}
\eqref{upiu} follows by means of \eqref{Shest}.

To show \eqref{upiu2}, we note that $u-\pi u$ vanishes in cells $T\in {\cal T}_h \setminus S_T$. Thus, let $T\in S_T$ and let $s\in\{1,2\}$ be the local approximation order of the interface in $T$. We use \eqref{L2u} and the fact that $s\geq 1$ to get
\begin{align*}
\|\nabla (u-\pi u) \|_T = \|\nabla &(u-\pi u) \|_{S_h\cap T}
\leq ch_P^{\frac{1+s}{2}} \|\nabla (u-\pi u)\|_{\Gamma\cap T} + ch_P^{1+s} \|\nabla^2(u-\pi u)\|_{S_h \cap T}\\
&\leq ch_P \left(\|\nabla u\|_{\Gamma\cap T} + \|\nabla \pi u\|_{\Gamma\cap T}\right)  + ch_P^2 \left( \|\nabla^2 u\|_{S_h \cap T} + \|\nabla^2 \pi u\|_{S_h \cap T}\right),
\end{align*}
where the derivatives on $\Gamma$ need to be seen from $S_h$.

After summation over all cells $T\in {\cal T}_h$ a global trace inequality and the continuity of the extension \eqref{extension} yield
\begin{align*}
\|\nabla (u-\pi u) \|_\Omega \leq ch_P \left( \|u\|_{H^2(\Omega_1\cup\Omega_2)} +\|\pi u\|_{H^2(\Omega_1\cup\Omega_2)}\right) \leq ch_P \|u\|_{H^2(\Omega_1\cup\Omega_2)}.
\end{align*}

\end{proof}


\subsection{Interpolation}

In this subsection, we will derive interpolation estimates for a Lagrangian interpolant $I_h$. {Let ${\cal L}_T$ be the set of Lagrange points that belong to a cell $T\in {\cal T}_h$.}
In the case of a linear interface approximation, it can happen that some of these lie on $\Gamma_h$, but not on $\Gamma$.
This means that there are elements with Lagrange points $\vec{x}_i\in{\cal L}_T$, that lie in different subdomains $\Omega_1$ and $\Omega_2$, see Figure~\ref{fig:interpolation}. Defining the interpolant as 
$I_h u = \sum_{i\in {\cal L}_T} u(\vec{x}_i)$ would lead to a poor approximation order (${\cal O}(h_P)$ in the $H^1$-norm), due to the discontinuity of $\nabla u$ across $\Gamma$. Each such point $\vec{x}_i$ lies, however, on a line between two points $\vec{x}_1^*$ and $\vec{x}_2^*$ on $\Gamma$. We use a linear interpolation of the values $u(\vec{x}_1^*)$ and $u(\vec{x}_2^*)$ in order to define $I_h u(\vec{x}_i) \coloneqq \frac{1}{2}(u(\vec{x}_1^*)+u(\vec{x}_2^*))$, see also {\cite[Eqn. (6.13)]{FreiDiss}} and Fig.~\ref{fig:interpolation}. 
%

\begin{figure}[t]
  \begin{center}
    \includegraphics[width=0.8\textwidth]{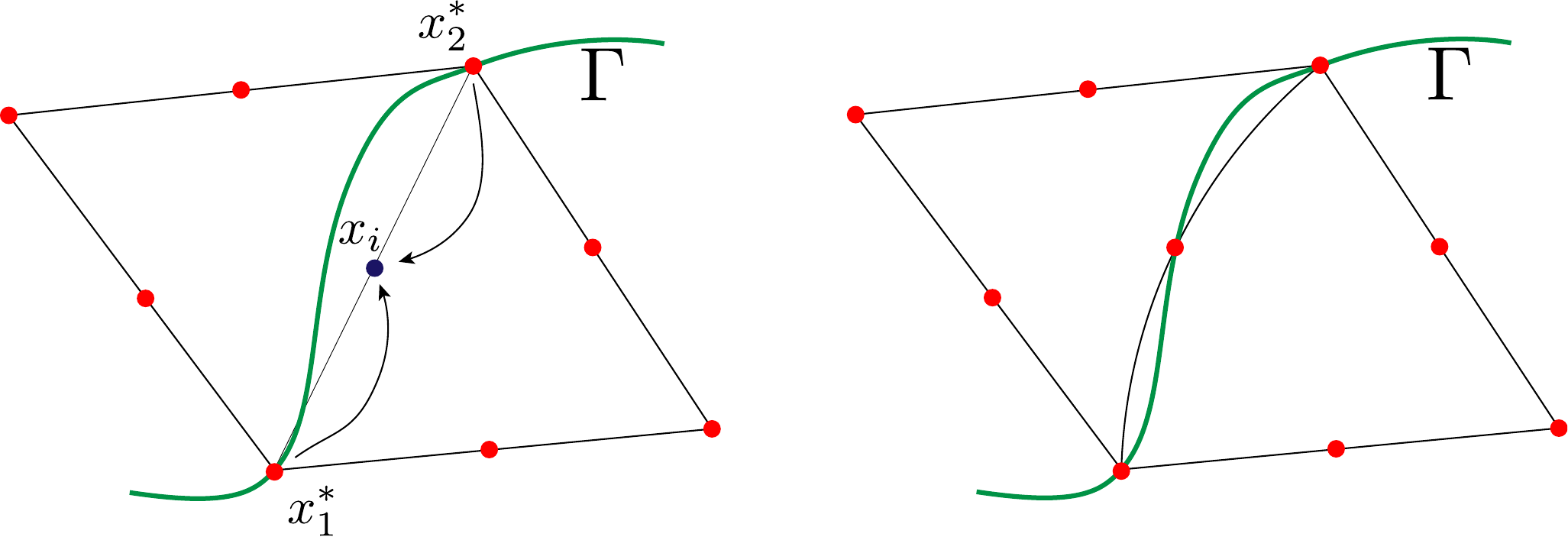}
  \end{center}
  \caption{Interpolation operator at the interface. If the interface is approximated with second order (right) we use the standard nodal interpolation. For linear interface approximations (left) we replace the node in the middle of the interface edge by the mean of the two adjacend corner nodes.}
  \label{fig:interpolation}
\end{figure}
We have the following approximation properties for this modified Lagrangian interpolant.

\begin{lemma}[Interpolation]\label{lem.interpol}
  Let $u\in {\cal U}:=\big[H_0^1(\Omega)\cap H^3(\Omega_1 \cup \Omega_2)]$ and $\tilde u = \pi u \in H^3(\Omega_h^1\cup\Omega_h^2)$ the function resulting from the map $\pi$ defined in \eqref{piu}. Moreover, we assume that $\Gamma$ is a smooth interface with $C^3$-parametrization and that the interface is approximated with second order in all elements $T\in {\cal T}_h$, except for maximum $n_{l}$ elements, 
  where the interface approximation is linear. It holds for the Lagrangian interpolation operator 
  $I_h: {\cal U} \to V_h$ that
  \begin{align}
    \|\nabla^m (u-I_h u)\|_{\Omega}&\leq c h_P^{2-m} \|u\|_{H^2(\Omega_1\cup\Omega_2)},\quad m=1,2\label{interpol1} \\
    \|\nabla (\tilde{u}- I_h u)\|_{\Omega}&{ \leq \left(c_l n_l^{1/2} |\ln(h)|^{1/2} + c_q\right)  h_P^2 \|u\|_{H^{3}(\Omega_1\cup\Omega_2)}.} \label{interpoltilde}
  \end{align}
{where $c_l$ and $c_q$ are generic constants that correspond to patches with a linear and a quadratic interface approximation, respectively.}  
 {For $u\in W^{2,\infty}(\Omega_1\cup \Omega_2)$ we have further
 \begin{align}\label{interpolW2infty}
 \|\nabla (\tilde{u}- I_h u)\|_{\Omega}& \leq \left( c_l n_l^{1/2} + c_q \right) h_P^2  \|u\|_{W^{2,\infty}(\Omega_1\cup\Omega_2)}.
 \end{align}} 
\end{lemma}
\begin{proof}
  First, we note that it holds $I_h u = I_h\tilde{u}$ by construction of the interpolant, as $u(\vec{x}_i)=\tilde{u}(\vec{x}_i)$ in all Lagrange points, that are used in the definition of the interpolant $I_h$.
  
Next, we note that the proof of all estimates is standard in all elements $T\in{\cal T}_h\setminus S_T$ that are not affected by the interface, since $\tilde{u}|_T = u|_T, u\in H^3(T)$  and
  \begin{align}\label{standardint}
    \|\nabla^m (u-I_h u)\|_{T}&\leq ch_P^{2-m} \|u\|_{H^2(T)} \, (m=1,2), \qquad \| \nabla (u-I_h u)\|_{T}\leq ch_P^2 \|u\|_{H^3(T)}.
  \end{align}
In elements $T\in S_{T,qu}$ there are no Lagrange points on $\Gamma_h\setminus\Gamma$ and $I_h$ is the standard Lagrangian interpolant. As $\tilde{u}$ is smooth in $T$, estimate \eqref{interpoltilde} 
is also standard 
\begin{align*}
\|\nabla (\tilde{u}- I_h u)\|_T &= 
\|\nabla (\tilde{u}- I_h \tilde{u})\|_T
\leq ch_P^2 \|\tilde{u}\|_{H^3(T)}. 
\end{align*}
In elements $T\in S_{T,lin}$ the interpolation is only linear due to the modification described above. Let $T\in S_{T,lin}^i$ with $i\in\{1,2\}$ and let $P$ be the patch that contains $T$. The following estimate has been shown {in~\cite[Lemma 6.14]{FreiDiss}}
\begin{align}\label{shlin}
\|\nabla (\tilde{u}- I_h u)\|_T &= 
\|\nabla (\tilde{u}- I_h \tilde{u})\|_T
\leq ch_P \|\nabla^2 \tilde{u}_i\|_P,
\end{align} 
where $\tilde{u}_i$ denotes the extension of $u_i$ to $\Omega$. 
We sum over all elements $T\in S_{T,lin}^i$ and denote by $S_{P,lin}^i$ the region, which is spanned by the patches $P$ containing elements $T\in S_{T,lin}^i$. It holds {with} $|S_{P,lin}^i|=n_l{\cal O}(h_P^2)$
\begin{align*}
  \|\nabla (\tilde{u}- I_h u)\|_{S_{T,lin}^i} &\leq 
  ch_P \|\nabla^2 \tilde{u}_i\|_{S_{P,lin}^{i}}\\
  &\leq ch_P |S_{P,lin}^i|^{\frac{1}{2} -\frac{1}{p}} \|\tilde{u}_i\|_{W^{2,p}(\Omega_i)}
  {\leq c n_l h_P^{2-\frac{2}{p}} \|\tilde{u}_i\|_{W^{2,p}(\Omega_i)}}.
\end{align*}
{The estimate \eqref{interpolW2infty} follows for $p=\infty$.
 To show \eqref{interpoltilde}, we can estimate further by using the Sobolev imbedding \eqref{Sobolev} for $p<\infty$
 \begin{align*}
  \|\nabla (\tilde{u}- I_h u)\|_{S_{T,lin}^i}  \,\leq\, c p^{1/2} n_l^{1/2} h_P^{2-2/p} \|u\|_{H^3(\Omega_i)}.
 \end{align*}
The estimate \eqref{interpoltilde} follows by means of Lemma~\ref{lem.Jp}.}
To show \eqref{interpol1} 
we split into
  \begin{align}
    \|\nabla^m (u- I_h u)\|_{T}&\le  \|\nabla^m (u - \tilde u)\|_{T} +
    \|\nabla^m (\tilde u - I_h  \tilde u)\|_{T}.\label{X1}
  \end{align}
   For $m=1$, the first term has been estimated in Lemma~\ref{lem.tildeu},
    for $m=2$ we can use the stability of the extension \eqref{extension}. 
    \eqref{interpol1} follows from \eqref{shlin} {in $S_{T,lin}$} and standard interpolation estimates {elsewhere}.
\end{proof}


\subsection{A priori error estimate}

We are now ready to prove the main result of this section. {To this end, we introduce the discrete energy norm
\begin{align*}
\vertiii{u-u_h} := \left(\|{\nu_1^{1/2}} \nabla (\tilde{u}_1 - u_{h}^1)\|_{\Omega_h^1}^2 + \|{\nu_2^{1/2}} \nabla (\tilde{u}_2 - u_{h}^2)\|_{\Omega_h^2}^2\right)^{1/2},
\end{align*}
where $\tilde{u}_i$ are smooth extensions of $u_i = u|_{\Omega_i}$ to $\Omega_h^i$ and $u_h^i := u_{h}|_{\Omega_h^i}.$}

\begin{theorem}[A priori estimate]
  \label{t10}
  Let $\Omega \subset \mathbb{R}^2$ be a convex domain with polygonal boundary, which is resolved (exactly) by the {family of} triangulations ${\cal T}_h$. 
  We assume 
  a splitting
  $\Omega=\Omega_1\cup \Gamma \cup \Omega_2$, where $\Gamma$ is a smooth interface with $C^3$-parametrization and that the solution $u$ to \eqref{VarForm} belongs to $H^3(\Omega_1\cup\Omega_2)$.
  Moreover, we denote
  by $n_l$ the maximum number of elements {$T\in {\cal T}_h$}, where the interface is approximated linearly. 
  For the locally modified finite element solution $u_h\in V_h$ to \eqref{DiscVar} it holds
  \begin{align}
    {\vertiii{u- u_h}}         
    &{\leq  \Big(c_l n_l^{\frac{1}{2}} |\ln(h)|^{1/2} +c_q\Big) h_P^2  \|u\|_{H^3(\Omega_1\cup\Omega_2)},}\label{energytilde}\\
    \|u-u_h\|_{\Omega} &\leq {\Big(c_l n_l |\ln(h)|^{1/2} + c_q \Big) h_P^3 \|u\|_{H^3(\Omega_1\cup\Omega_2)}}.\label{L2result}
  \end{align}
  {where $c_l$ and $c_q$ are generic constants that correspond to patches with a linear and a quadratic interface approximation, respectively.}  
  {For $u\in W^{2,\infty}(\Omega_1\cup \Omega_2)$ we have further
  \begin{align}
     \vertiii{u- u_h}         
    \leq \Big(c_l n_l^{\frac{1}{2}} +c_q\Big) h_P^2 \left( \|u\|_{H^3(\Omega_1\cup\Omega_2)} + \|u\|_{W^{2,\infty}(\Omega_1 \cup\Omega_2)}\right).\label{energyW2infty}
    \end{align}}
 \end{theorem}


%
\begin{proof}
  \emph{(i)} First, we have the following perturbed Galerkin orthogonality by subtracting \eqref{DiscVar} from \eqref{VarForm}
  \begin{align}\label{perturbed}
    a(u,\phi_h) - a_h(u_h, \phi_h) &= (f-f_h,\phi_h)_{\Omega} \quad \forall \phi_h\in V_h.
  \end{align}
  We start by estimating the right-hand side in \eqref{perturbed}. The difference $f-f_h$ vanishes everywhere besides on $S_h$. We have
  \begin{align*}
    (f-f_h,\phi_h)_{\Omega} = (f-f_h,\phi_h)_{S_h} \leq \left(\|f_1\|_{S_h} +\|f_2\|_{S_h}\right) \|\phi_h\|_{S_h},
  \end{align*}
  where $f_i$ denotes a smooth extension of $f|_{\Omega_i}$ to $\Omega, i=1,2$.

  We split the region $S_h$ into parts with a quadratic interface approximation $S_{h,qu}$ and parts with a linear approximation. \eqref{fullqu} and \eqref{hp32eps} yield
  \begin{equation}\label{fminusfh_qu}
    \begin{aligned}
      \|f_i\|_{S_{h,qu}}+ \|f_i\|_{S_{h,lin}} 
      &\leq c h_P\|f\|_{H^1(\Omega_1\cup\Omega_2)} {\,\leq\, c h_P\|u\|_{H^3(\Omega_1\cup\Omega_2)}} \quad\text{ and } \\
      \|f_i\|_{S_{h,qu}}+ \|f_i\|_{S_{h,lin}} 
      &\leq \Big(c h_P^{\frac{3}{2}} + {c n_l^\frac{1}{2} h_P^{\frac{3}{2}}|\ln(h)|^{1/2}\Big) \|f\|_{H^1(\Omega_1\cup\Omega_2)}}\\
      &{\leq \Big(c h_P^{\frac{3}{2}} + c n_l^\frac{1}{2} h_P^{\frac{3}{2}}|\ln(h)|^{1/2}\Big) \|u\|_{H^3(\Omega_1\cup\Omega_2)}.}      
    \end{aligned}
  \end{equation}
  The second estimate yields
  \begin{align}\label{fminusfh}
    (f-f_h,\phi_h)_{\Omega} \leq {ch_P^{\frac{3}{2}} \left(1+ n_l^{\frac{1}{2}}|\ln(h)|^{1/2}\right) \|u\|_{H^3(\Omega_1\cup\Omega_2)}} \|\phi_h\|_{S_h}.
  \end{align}
  \emph{(ii)}  
  For the energy norm estimate 
  we start by splitting into interpolatory and discrete part
  \begin{equation}\label{startenergy}
    \begin{aligned}
      {\vertiii{u-u_h}}
      \leq \|\nu_h^{1/2} \nabla(\tilde{u} - I_h u)\|_{\Omega} + \|\nu_h^{1/2} \nabla (I_h u - u_h)\|_{\Omega}.
    \end{aligned}
  \end{equation}
 The interpolatory part has already been estimated in Lemma~\ref{lem.interpol}. For the second term in \eqref{startenergy}, we use the perturbed Galerkin orthogonality (\ref{perturbed}) with $\varphi_h:=I_h u-u_h$ 
  \begin{align}
    \|\nu_h^{1/2} \nabla (I_h u - u_h)\|_{\Omega}^2
    &= \left(\nu_h \nabla (I_h u - u_h), \nabla (I_h u-u_h)\right)_{\Omega}\notag\\
    &=  \left(\nu_h \nabla I_h u - \nu \nabla u, \nabla (I_h u-u_h)\right)_{\Omega} + (f-f_h, I_h u -u_h)_{\Omega}.\label{splitffh}
  \end{align}
  We split the first part in \eqref{splitffh} further
  \begin{align}\label{split27}
  \begin{split}
    \left(\nu_h \nabla I_h u - \nu \nabla u, \nabla (I_h u-u_h)\right)_{\Omega}\,
= \, &\left(\nu_h \nabla (I_h u - \tilde{u}), \nabla (I_h u-u_h)\right)_{\Omega}\\
&\qquad+ \left(\nu_h \nabla \tilde{u} - \nu \nabla u, \nabla (I_h u-u_h)\right)_{\Omega}.    
\end{split}
\end{align}
 For the first part, we use \eqref{interpoltilde} to get
 \begin{align}\label{split28}
   \left(\nu_h \nabla (I_h u - \tilde{u}), \nabla (I_h u-u_h)\right)_{\Omega}
  {\leq\, ch_P^2 \left(n_l^{\frac{1}{2}} |\ln(h)|^{1/2} +1\right) \|u\|_{H^3(\Omega_1\cup\Omega_2)}} \|\nu_h^{1/2} \nabla (I_h u-u_h) \|_\Omega. 
 \end{align}
  The integrand in the second term on the right-hand side of \eqref{split27} vanishes everywhere besides on $S_h$. We obtain {by} the Sobolev imbedding $H^3(\Omega_i)\subset W^{1,\infty}(\Omega_i)$, the continuity of the extension \eqref{extension} and \eqref{DiscShi} and \eqref{Shest} from Lemma~\ref{lem.Sh}
  \begin{align}
  \begin{split}\label{nuhtildeunuu}
   \left(\nu_h \nabla \tilde{u} - \nu \nabla u, \nabla (I_h u-u_h)\right)_{\Omega} 
&=   \left(\nu_h \nabla \tilde{u} - \nu \nabla u, \nabla (I_h u-u_h)\right)_{S_h}\\
   &\leq c \left(\left\|\nabla \tilde{u}\right\|_{S_h} + \left\| \nabla u\right\|_{S_h}\right) \|\nu_h^{1/2} \nabla (I_h u-u_h)\|_{S_h}\\
   &\leq c |S_h|^{1/2} \|u\|_{W^{1,\infty}(\Omega_1\cup\Omega_2)} h_P^{1/2}
   \|\nu_h^{1/2} \nabla (I_h u-u_h)\|_{S_T} \\
   &\leq ch_P^2 \left(n_l^{1/2} +1\right) \|u\|_{H^3(\Omega_1\cup\Omega_2)}
   \|\nu_h^{1/2} \nabla (I_h u-u_h)\|_{\Omega}
   \end{split}
  \end{align}
  For the second term in \eqref{splitffh}, we use \eqref{fminusfh_qu} and \eqref{fullqu}
  \begin{align*}
    (f-f_h, I_h u -u_h)_{\Omega} &\leq c h_P \|u\|_{H^3(\Omega_1\cup\Omega_2)} \|I_h u -u_h\|_{S_h}\\
&\leq c h_P^2 \|u\|_{H^3(\Omega_1\cup\Omega_2)} \|\nu_h^{1/2}\nabla (I_h u - u_h)\|_{\Omega_1\cup\Omega_2}.
  \end{align*}
Combining the estimates, we obtain 
  \begin{align*}
    \big\|\nu_h^{\frac{1}{2}} \nabla (I_h u - u_h)\big\|_{\Omega_1\cup\Omega_2} \leq ch_P^2 \|u\|_{H^3(\Omega_1 \cup\Omega_2)}.
  \end{align*}
  This completes the proof of \eqref{energytilde}. {The proof of \eqref{energyW2infty} follows exactly the same lines, with the only difference that we use \eqref{interpolW2infty} instead of \eqref{interpoltilde} in \eqref{split28} to get
   \begin{align}
   \left(\nu_h \nabla (I_h u - \tilde{u}), \nabla (I_h u-u_h)\right)_{\Omega}
   \leq\, ch_P^2 \left(n_l^{\frac{1}{2}} +1\right) \|u\|_{W^{2,\infty}(\Omega_1\cup\Omega_2)} \|\nu_h^{1/2} \nabla (I_h u-u_h) \|_\Omega. 
 \end{align}  
  }

  \medskip\noindent\emph{(iii)} To estimate the $L^2$ - norm error, we define the following adjoint problem. Let $z\in H_0^1(\Omega)$ be the solution of 
  \[
  (\nu\nabla \varphi , \nabla z)=\|e_h\|^{-1} (e_h, \varphi)_\Omega \quad \forall \varphi \in H_0^1(\Omega).
  \]
  The solution $z$ lies in in $H_0^1(\Omega)\cap H^2(\Omega_1 \cup \Omega_2)$ and satisfies
  \[
  \begin{aligned}
    \| z\|_{H^2(\Omega_1 \cup \Omega_2)} \leq c_s.
  \end{aligned}
  \]
  By choosing $\varphi = u-u_h=e_h$ and adding and subtracting $\nu_h\nabla u_h$, we have
  \begin{align}\label{L2split}
    \|e_h\|=(\nu\nabla e_h , \nabla z)_\Omega
    = (\nu\nabla u - \nu_h \nabla u_h, \nabla z)_\Omega + ((\nu_h-\nu) \nabla u_h, \nabla z)_\Omega.
  \end{align} 
  For the second term in \eqref{L2split}, we have 
  \begin{align}\label{L2split2}
    \begin{split}
      ((\nu_h-\nu) \nabla u_h, \nabla z)_\Omega &= ((\nu_h-\nu) \nabla u_h, \nabla z)_{S_h}
      \leq C \left( \|\nu_h \nabla u_h \|_{S_{h}}  \|\nabla z\|_{S_{h}} \right)
    \end{split}
  \end{align}
  We split the first term on the right-hand side further and use the bound for the energy norm error as well as \eqref{hp32eps} (Lemma~\ref{lem.Sh})
  \begin{align*}
    \| \nu_h\nabla u_h\|_{S_{h}} &\leq 
    \| \nu_h\nabla (u_h- u)\|_{S_{h}}
    + \|\nu \nabla u\|_{S_{h}}\\
    &\leq c \left(n_l^{\frac{1}{2}}+1\right) h_P^{3/2} \|u\|_{H^3(\Omega_1\cup\Omega_2)}.
  \end{align*}
  For the last term in \eqref{L2split2}, we obtain from \eqref{fullqu} 
  and \eqref{hp32eps}
  \begin{align*}
    \|\nabla z \|_{S_{h,lin}} &{\leq ch_P^{\frac{3}{2}} \left( n_l^{\frac{1}{2}} |\ln(h)|^{1/2} +1\right)  \|z\|_{H^2(\Omega_1\cup\Omega_2)} \leq  ch_P^{\frac{3}{2}} \left( n_l^{\frac{1}{2}} |\ln(h)|^{1/2} +1\right),}\\  
    \|\nabla z \|_{S_{h,qu}} &\leq c h_P^{\frac{3}{2}}  \|z\|_{H^2(\Omega_1\cup\Omega_2)} \leq c h_P^{\frac{3}{2}}.
  \end{align*}
  {Altogether, we obtain} for the second term in \eqref{L2split}
  \begin{align}\label{nunuhuhz}
    ((\nu_h-\nu) \nabla u_h, \nabla z)_\Omega \leq ch_P^3 \left(n_l |\ln(h)|^{1/2} + 1 \right) \|u\|_{H^3(\Omega_1\cup\Omega_2)}.
  \end{align}
  {Concerning} the first term in \eqref{L2split}, we add and substract the interpolant $\nabla I_h z$, as well as $\pm \nu_h \tilde u$
  \begin{align}
\begin{split}  
  \label{L2split3}
    (\nu\nabla u - \nu_h \nabla u_h, \nabla z)_\Omega 
    &= (\nu\nabla u - \nu_h \nabla \tilde{u}, \nabla (z-I_h z))_\Omega
    +
    (\nu_h\nabla (\tilde{u} - u_h), \nabla (z-I_h z))_\Omega    \\
    &\qquad+ (\nu\nabla u - \nu_h \nabla u_h, \nabla I_h z)_\Omega.
    \end{split}
  \end{align}
For the first term on the right-hand side, we obtain as in \eqref{nuhtildeunuu}
  \begin{align*}
  (\nu\nabla u - \nu_h \nabla \tilde{u}, \nabla (z-I_h z))_\Omega
  &\leq ch_P^{3/2} \left(n_l^{1/2} +1\right) \|u\|_{H^3(\Omega_1\cup\Omega_2)}
   \|\nu_h^{1/2} \nabla (z-I_h z)\|_{S_h}
  \end{align*}
  {We estimate the latter norm} using \eqref{fullqu}, \eqref{hp32eps} and \eqref{interpol1} 
  \begin{align*}
    \|\nu_h^{1/2} \nabla (z-I_h z)\|_{S_h} &{\leq ch_P^{3/2} \left(n_l^{1/2}|\ln(h)|^{1/2} + 1\right) \|z-I_h z\|_{H^2(\Omega_1\cup\Omega_2)}}\\
    &{ \leq ch_P^{3/2} \left(n_l^{1/2}|\ln(h)|^{1/2} + 1\right).}
  \end{align*}
  The second term in \eqref{L2split2} is easily estimated with the bound for the energy norm and the interpolation error \eqref{interpol1} 
  \begin{align*}
  (\nu_h\nabla (\tilde{u} - u_h), \nabla (z-I_h z))_\Omega 
    &\leq c\left(n_l^{\frac{1}{2}}+1\right) h_P^3 \|u\|_{H^3(\Omega_1\cup\Omega_2)}.
  \end{align*} 
 For the third term in \eqref{L2split3}, we use the perturbed Galerkin orthogonality \eqref{perturbed}
  \begin{align}
    \begin{split}
      \label{L2split4}
      (\nu\nabla u - \nu_h \nabla u_h, \nabla I_h z)_\Omega &= 
      (f-f_h, I_h z)_{S_h} \\
      &\leq 
      \|f_1-f_2\|_{S_{h,lin}} \|I_h z\|_{S_{h,lin}} +
      \|f_1-f_2\|_{S_{h,qu}} \|I_h z\|_{S_{h,qu}}.
    \end{split}
  \end{align}
  For the first part in both terms, we use \eqref{fullqu} and \eqref{hp32eps}, respectively
  \begin{align*}
    \|f_1-f_2\|_{S_{h,lin}} +
    \|f_1-f_2\|_{S_{h,qu}}\, &\leq\,  {c h_P^{\frac{3}{2}}
    \Big(n_l^{\frac{1}{2}} |\ln(h)|^{1/2}  +
    1\Big) \|f\|_{H^1(\Omega_1\cup\Omega_2)}}\\
    & \leq\,  {c h_P^{\frac{3}{2}}
    \Big(n_l^{\frac{1}{2}} |\ln(h)|^{1/2}  +
    1\Big) \|u\|_{H^3(\Omega_1\cup\Omega_2)}.}
  \end{align*}
  For the remaining terms in \eqref{L2split4}, it is sufficient to consider the smallness of $|S_h|$, a Sobolev imbedding and the continuity of the extension~\eqref{extension}
  \begin{align*}
    \|I_h z\|_{S_{h,lin}} &\leq |S_{h,lin}|^{\frac{1}{2}} \|I_h z\|_{L^{\infty}(\Omega)} \leq cn_l^{\frac{1}{2}} h_P^{\frac{3}{2}}\|z\|_{L^{\infty}(\Omega)} \leq cn_l^{\frac{1}{2}} h_P^{\frac{3}{2}}\|z\|_{H^2(\Omega_1\cup\Omega_2)} \leq cn_l^{\frac{1}{2}} h_P^{\frac{3}{2}}\\
    \|I_h z\|_{S_{h,qu}} &\leq |S_{h,qu}|^{\frac{1}{2}} \|I_h z\|_{L^{\infty}(\Omega)} \leq ch_P^{\frac{3}{2}}.
  \end{align*}
 Altogether this yields the following estimate for the term in \eqref{L2split3}, which completes the proof of the $L^2$-norm estimate
  \begin{align*}
    (\nu\nabla u - \nu_h \nabla u_h, \nabla z)_\Omega 
    {\leq ch_P^{3} \left(n_l |\ln(h)|^{1/2}  + 1 \right) \|u\|_{H^3(\Omega_1\cup\Omega_2)}.}
  \end{align*}
\end{proof}

{
\begin{remark}{(Energy norm)}
There are different possibilities to choose the energy norm in Theorem~\ref{t10}. The result \eqref{energytilde}
could also be shown in the corresponding norm defined on the continuous subdomains $\Omega_1$ and $\Omega_2$
\begin{align}\label{norm2}
\vertiii{u-u_h}_{2} := \left(\| {\nu_1^{1/2}} \nabla (u_1 - \tilde{u}_{h}^1)\|_{\Omega_1}^2 + \|{\nu_2^{1/2}} \nabla (u_2 - \tilde{u}_{h}^2)\|_{\Omega_2}^2\right)^{1/2},
\end{align}
where $u_i = u|_{\Omega_i}$ and $\tilde{u}_h^i$ denote the canonical extensions of $u_h^i:= u_{h}|_{\Omega_h^i}$ to $\Omega_i$.
If one would consider the norm 
  \begin{align}
    \vertiii{u-u_h}_3 := \|\nu_h^{1/2}\nabla (u - u_h)\|_{\Omega}\label{tildeu}
  \end{align} 
  a reduced order of convergence, namely ${\cal O}(h_P^{\frac{3}{2}})$ would result, even for a fully quadratic interface approximation ($n_l=0$). The reason is that $\nabla u$ shows a discontinuity across $\Gamma$, while $\nabla u_h$ is discontinuous across the discrete interface $\Gamma_h$. Hence, the error in the gradient is ${\cal O}(1)$ in the strip $S_h$ between the interfaces, which is of size $|S_h|^{1/2}={\cal O}(h_P^{3/2})$. This bound is already optimal in the estimate for $\|\nabla (u-\pi u) \|_{\Omega}$ in \eqref{upiu}.
  
  We have chosen the discrete energy norm $\vertiii{u-u_h}$ in Theorem~\ref{t10}, as this is the only norm, which can be easily evaluated by numerical quadrature. A quadrature formula that evaluates the norms \eqref{norm2} or \eqref{tildeu} accurately
would need to resolve the strip $S_h$, which is non-trivial. Any standard approximation, such as a summed midpoint rule would lead to an additional quadrature error of ${\cal O}(h_P^{3/2})$, which would dominate the overall error.

\end{remark}}

{\begin{remark}{(Regularity)}
We have assumed the regularity $u\in H^3(\Omega_1\cup\Omega_2)$ (resp.\,$u\in W^{2,\infty}(\Omega_1\cup\Omega_2)$) in Theorem~\ref{t10}. This is guaranteed if both subdomains $\Omega_1$ and $\Omega_2$ are smooth (precisely $W^{3,\infty}$) and the right-hand side has regularity $f\in H^1(\Omega_1\cup\Omega_2)$ (resp.\,$f\in L^\infty(\Omega_1\cup\Omega_2)$). In this work the overall domain $\Omega$ is assumed polygonal in order to avoid additional technicalities associated with the approximation of exterior curved boundaries. For the latter we refer to the literature, for example~\cite{Bernardi1989}.
\end{remark}}



\section {Implementation}
\label{sec.impl}

\begin{figure}[bt]
  \centering
  \includegraphics[trim=50mm 70mm 10mm 40mm,clip,width=0.75\textwidth]{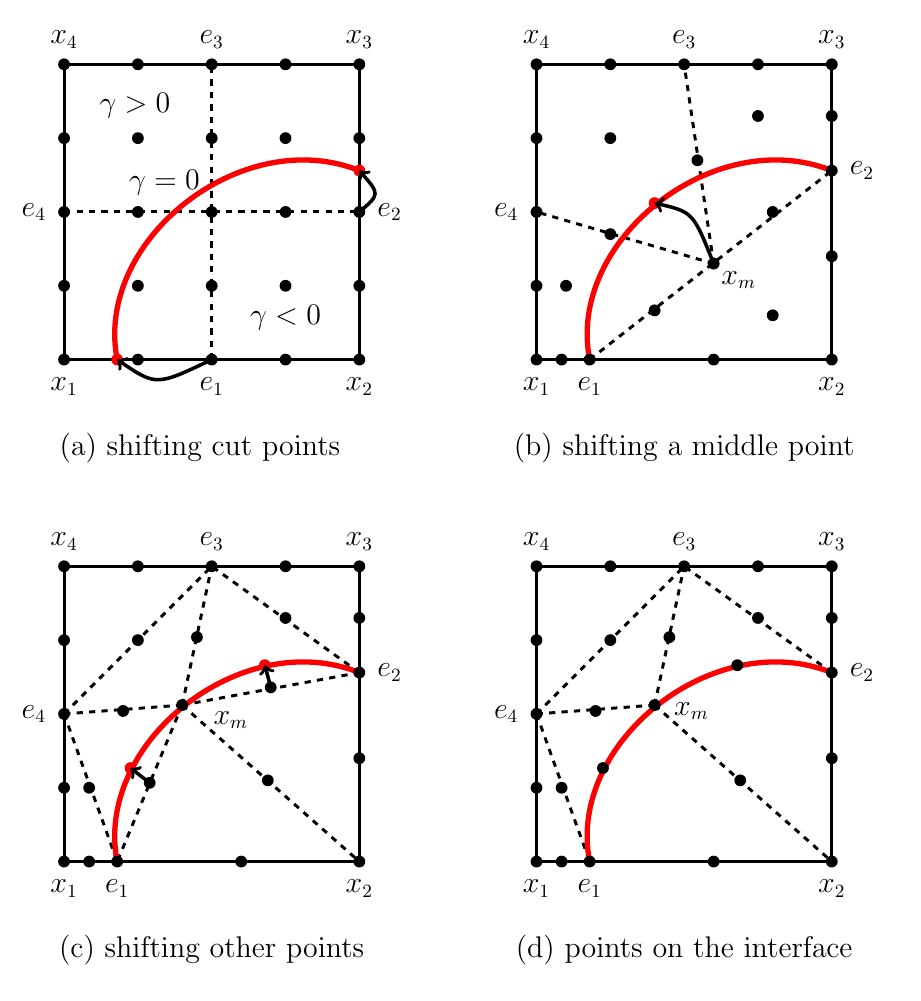}
  \caption{Rearrangement of the Lagrangian points on the interface.}
  \label{fig:implement}
\end{figure}

The locally modified finite element method is based on a patch-wise parametric approach. Let {${\cal T}_P$ be the triangulation in patches}. 
We denote by $P\in {{\cal T}_P}$ the patches, which are quadrilaterals with 25 degrees of freedom (see Figure~\ref{fig:implement}). Depending on the location of the interface, we have two kinds of patches:
\begin{itemize}
\item If a patch is not cut by the interface, {we divide} it into four quadrilaterals {$T_1,\;...\;,T_{4}$}. 
  In this case we take 
  the standard space of piecewise biquadratic functions as follows: 
  \begin{align*}\label{biquad}
    \hat{Q}=\big\{\phi\in C(\hat{P}) \, ,\, {\phi |_{\hat{T_i}}} \in \text{span}\{1,x,y,x^2,xy,y^2,xy^2,x^2y,x^2y^2\},\; i=1,...,4\big\},
  \end{align*}
  where $\hat{P}$ is the reference patch on the unit square $(0,1)^2$ consisting of the four quadrilaterals {$\hat{T}_1,...,\hat{T}_{4}$}.
\item If the patch is cut by the interface, {we divide} into eight triangles $T_1,\;...\;,T_8$. Here we define the space of piecewise quadratic functions as follows: 
  \begin{equation*}\label{quadratic}
    \hat{Q}=\big\{\phi\in C(\hat{P}) \, ,\, \phi |_{\hat{T_i}} \in \text{span}\{1,x,y,x^2,xy,y^2\},\;  i=1,...,8\big\}, \;\;\; 
  \end{equation*}
  where the reference patch $\hat{P}$ consists of eight triangles $\hat{T}_1,...,\hat{T}_8$. 
\end{itemize}
In both cases, we have locally 25 basis functions in each patch (see Figure~\ref{fig:implement})
\begin{equation*}
  Q(P):=\text{span}\{\phi_i\}, \,\, \phi_i:=\hat{\phi_i}\circ {\hat{\xi}^{-1}_P},\,\,\, i=1,...,25.
\end{equation*}
{${\hat{\xi}_P}\in \hat{Q}$} is the reference patch map, which is defined in an isoparametric way by
\begin{equation}\label{dofs}
  {\hat{\xi}_P}(\hat{\vec{x}}):=\sum_{j=1}^{25}y_j \,\hat{\phi_j}
\end{equation}
{for the 25 vertices $y_i, i =1,...,25$ of $P$}.

\subsubsection{Definining the patch type and movement of mesh nodes}
We assume that the interface is given as zero level-set of an implicit level-set function $\gamma (\vec{x})$
\begin{align*}
  \gamma(\vec{x}) = 0 \qquad \Leftrightarrow\qquad \vec{x}\in \Gamma.
\end{align*}
The patch type and the edges that are cut {are determined by the sign of} $\gamma(\vec{x}_i)$ in the exterior vertices $\vec{x}_1,...,\vec{x}_4$, see Figure~\ref{fig:implement}. 
An edge $e$ is cut, if $\gamma(\vec{x}_1)\cdot \gamma(\vec{x}_2)<0$ for its two end points $\vec{x}_1, \vec{x}_2$. The intersection of the interface with the edge can the be found
by applying Newton's method locally to find the zero $r$ of
\begin{align}\label{gammar}
  \gamma \big(\vec{x}_1+r(\vec{x}_2-\vec{x}_1)\big)=0,
\end{align}
see Figure \ref{fig:implement} (a). The edge midpoints $\vec{e}_1$ and $\vec{e}_2$ will be moved to the
respective 
position $\vec{x}_1+r(\vec{x}_2-\vec{x}_1)$. Next, we define a preliminary coordinate for the midpoint 
of the patch $\vec{x}_m$ as the midpoint of a segment $\vec{e}_1\vec{e}_2$, see Figure \ref{fig:implement} (b).
For a second-order interface approximation, it is necessary to move $\vec{x}_m$ to the interface $\Gamma$ in the configurations $A$ to $D$. We use again
Newton's method to move $\vec{x}_m$ to the interface along a normal line, see Figure \ref{fig:implement} (c). 
Second, we also move the midpoints of the segments $\vec{e}_1\vec{x}_m$ and $\vec{x}_m\vec{e}_2$ analogously, see Figure \ref{fig:implement} (d). Finally, we need to specify 
a criteria to ensure that the resulting sub-triangles with curved boundaries fulfill a maximum angle condition. Details are given in appendix C.

\begin{remark} 
  A disadvantage of the modified second-order finite element method described above is that the stiffness matrix can be ill-conditioned for certain anisotropies. In particular, the condition number depends not only on the mesh size, but also on how the interface intersects the triangulation (e.g., $s,r\rightarrow 0$). In section~\ref{sec.num} we consider two examples, where the condition number 
  of the stiffness matrix is not bounded. 
  For this reason a hierarchical finite element 
  basis was introduced in \cite{Frei1} for linear finite elements and it was shown that {the condition number of} the stiffness matrix satisfies the usual bound ${\cal O}(h_P^{-2})$ with a constant 
  that does not depend on the position of the interface. We extend this approach to the second-order finite element method below. 
  We will see that
  the condition number for a scaled hierarchical basis is reduced significantly, although we can not guarantee the optimal bound for the method presented here.    
\end{remark}

{\begin{remark}[Comparison with unfitted finite element methods]
In contrast to unfitted finite element methods (for example Hansbo\& Hansbo~\cite{HansboHansbo2002}), 
continuity can be imposed strongly within the finite element spaces in the locally finite element method, while in unfitted methods a weak imposition based on Nitsche's method is typically used. Thus, an advantage of the locally modified finite element 
method is that it is parameter-free.
 The most tidious task in the implementation of unfitted finite element methods is the construction of suitable quadrature formulas. Usually, the cut cells are sub-divided into sub-cells~\cite{CutFEM}, similarly to the subdivision used within the locally modified finite element method. For the purpose of quadrature no maximum angle condition is needed, which is required for the fitted method. This might be considered as an advantage of the unfitted approach, in particular concerning three dimensional problems. As a remeady in the fitted method, one could allow to move exterior patch vertices in certain "pathological" situations, as discussed in Remark~\ref{rem.3d}.
\end{remark}}

\section {Numerical examples}
\label{sec.num}
  
  The higher order parametric finite element method is based on the
  finite element framework \emph{Gascoigne 3d}\cite{gascoigne}. The
  source code is freely available at \url{https://www.gascoigne.de}
  {and published as Zenodo repository~\cite{ZenodoGascoigne2021}.} 
  For reproducibility of the numerical results, the following two
  configurations are implemented and described in a separate {Zenodo
  repository~\cite{Zenodo2021}.}

\subsection {Example 1}
We consider a square domain $\Omega = (-2,2)^2$. The domain is split into two domains $\Omega_1$ and $\Omega_2$ by the 
{interface $\Gamma {=} \{(x,y)\in \Omega \, | \, l(x,y)=0\}$ with level-set function} $l(x,y)= y-2(x+\delta h)^2+0.5$, where $\delta \in [0,1]$ and $h$ is the mesh size. We take $\nu_1=4$ and $\nu_2=1$ and choose the exact solution as
\[
u(x,y)=
\begin{cases}
  \frac{1}{\nu_1}\sin(l), \;\; \text{in} \;\,\Omega_1, \\
  \frac{1}{\nu_2}\sin(l), \;\; \text{in} \;\, \Omega_2,
\end{cases}
\]
{by setting} the right-hand side $f_i=-\nu_i \Delta u$ and Dirichlet boundary data {accordingly}. We vary $\delta\in [0,1]$, such that
this example includes different configurations with arbitrary anisotropies. The subdomains and the exact solution for this example are shown in Figure \ref{fig:problem_1}. 
\begin{figure}[h]
  \centering
  \begin{minipage}{4cm}
    \hspace{-3cm}
    \includegraphics[trim=0mm 90mm 30mm 40mm,clip,width=1.2\textwidth]{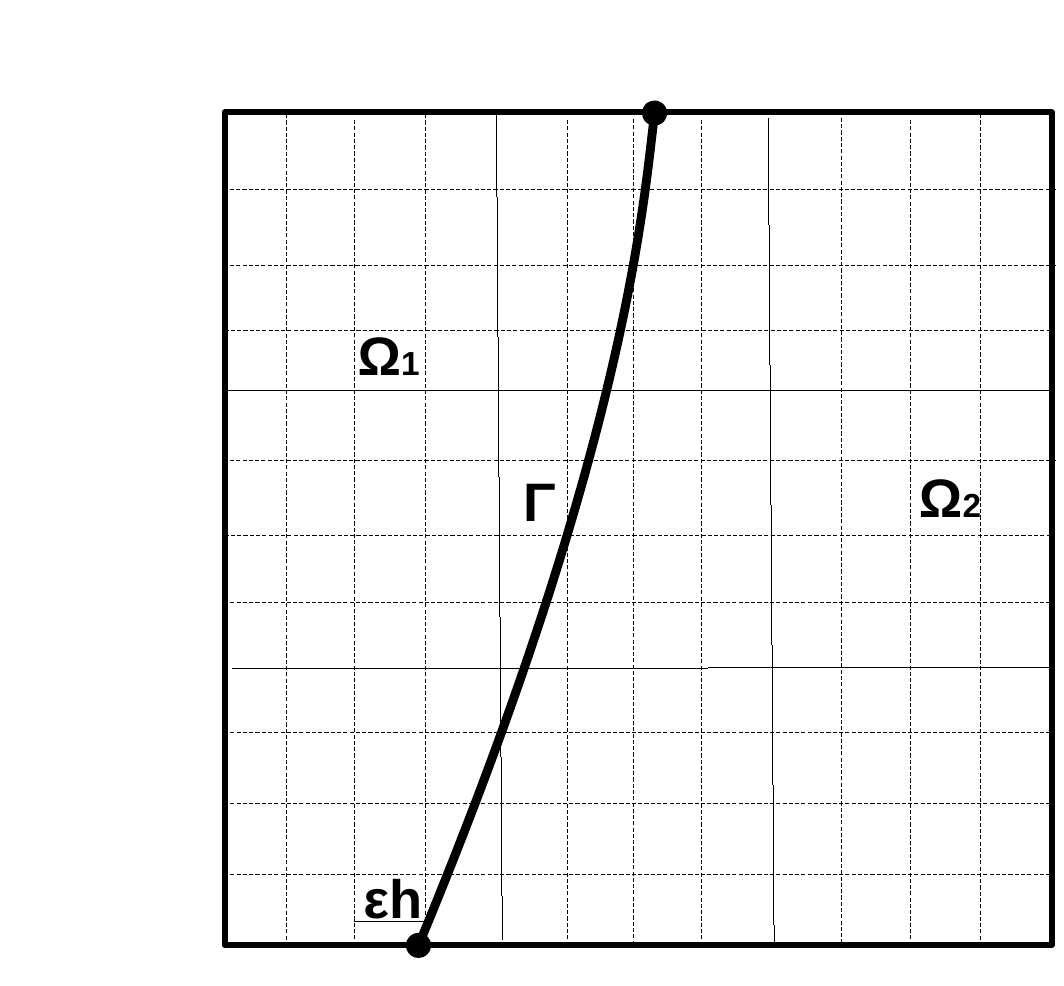}
  \end{minipage}
  \hspace{1cm}
  \begin{minipage}{4cm}
    \includegraphics[trim=40mm 10mm 20mm 10mm,clip,width=2.3\textwidth]{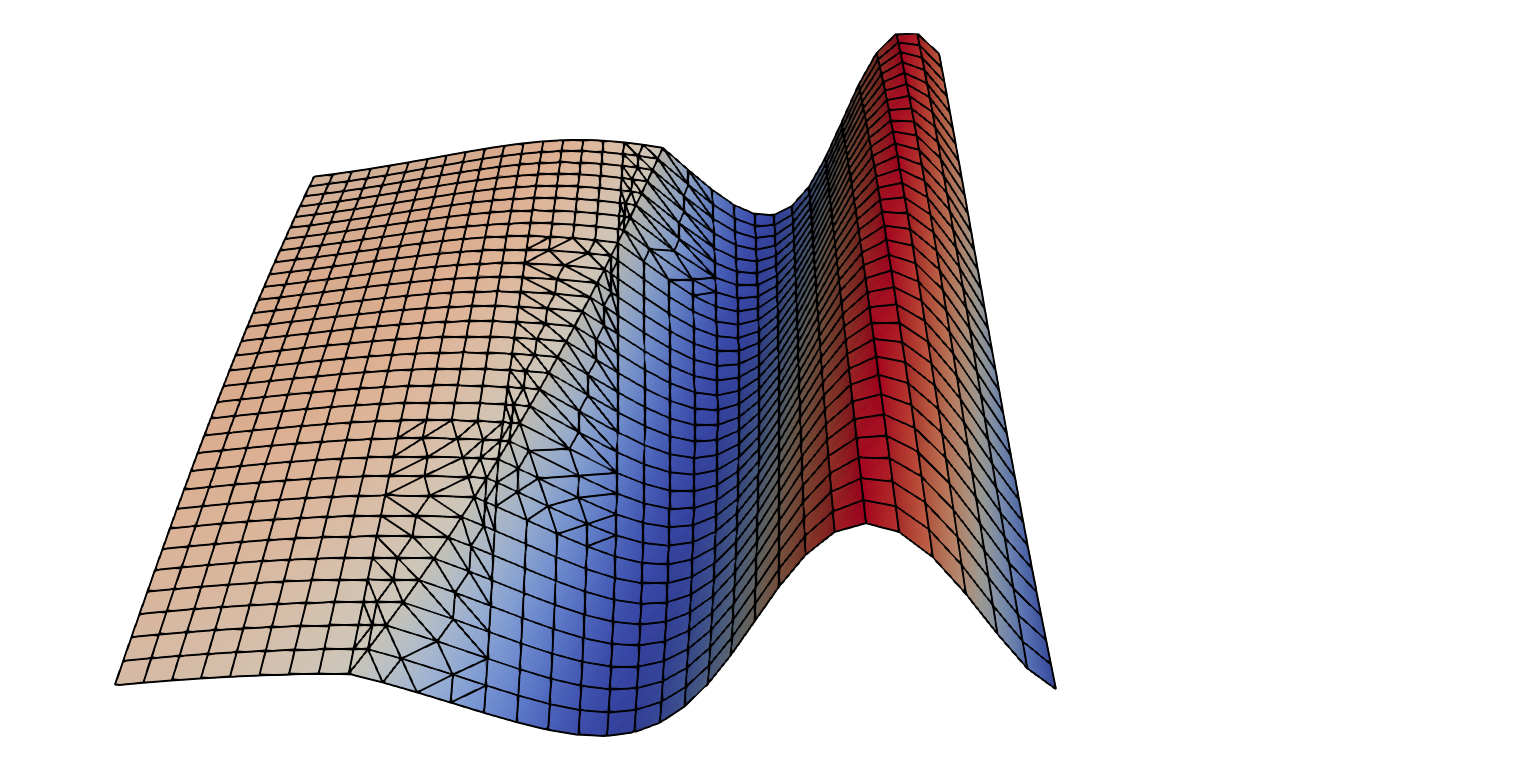}
  \end{minipage}
  \caption{Example 1. \textit{Left}: Configuration of the test problem. \textit{Right}: Sketch of the exact solution}
  \label{fig:problem_1}
\end{figure}
In this example the interface could be resolved with second order on all {refinement levels} ($n_l=0$). 
Table~\ref{tab.1} shows the {discrete energy norm error $\vertiii{u-u_h}$ and the $L^2$-norm error} as well as estimated convergence orders on several levels of global mesh refinement for the fixed parameter $\delta=0$.
According to the a priori error 
estimate in Theorem~\ref{t10}, we observe fully quadratic convergence in the {discrete} energy norm and fully cubic convergence in the $L^2$ - norm.  
\begin{table}[h]
  \centering 
  \begin{tabular}{ c | c  c | c  c }
    \toprule
    $h$		& $L^2$ - error           & $EOC$         & energy error     	  & $EOC$        \\  \midrule
    \rule{0pt}{2.8ex}
    $1/32$     	& $1.74\cdot 10^{-4}$    & -      	  & $2.08 \cdot 10^{-2}$   & -            \\   
    \rule{0pt}{3ex}
    $1/64$    	& $2.13\cdot 10^{-5}$   & $3.023$        & $5.22\cdot 10^{-3}$     & $1.998$        \\   
    \rule{0pt}{3ex}
    $1/128$    	& $2.65\cdot 10^{-6}$   & $3.006$       & $1.31\cdot 10^{-3}$     & $1.999$        \\   
    \rule{0pt}{3ex}
    $1/256$    	& $3.31\cdot 10^{-7}$   & $3.004$       & $3.26\cdot 10^{-4}$     & $2.000$       \\
    \bottomrule
  \end{tabular} 
  
  \caption{Example 1. Errors in the $L^2$ - norm and the {discrete} energy norm, including an estimated order of convergence which is computed from two consecutive values in each row for Example 1 and $\delta=0$.\label{tab.1}}
\end{table}
{In Figure \ref{fig:error}}, we plot the discrete energy norm error and the $L^2$-norm error for $\delta \in [0,1]$ on several levels of global mesh refinement and observe that the error is bounded independently of $\delta$.

In Figure \ref{fig:condition-numberParabel}, we show how the condition number depends on the parameter $\delta \in [0,1]$ by moving the interface. We get the largest condition numbers at $\delta = 0.84$. Furthermore, we show a zoom-in of the numbers for $\delta \in {[0.83,0.85]}$ in Figure \ref{fig:condition-numberParabel}, right. We see that the condition number {is reduced by a factor of $100$ using a scaled hierarchical basis}, but that is not necessarily bounded for arbitrary anisotropies.    
\begin{figure}[h]
  \hspace{1cm}
  \begin{minipage}{5cm}
    \includegraphics[trim=0mm 0mm 0mm 0mm,clip,width=1.4\textwidth]{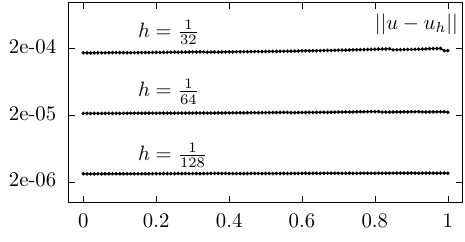}
  \end{minipage}
  \hspace{2.2cm}
  \begin{minipage}{5cm}
    \includegraphics[trim=0mm 0mm 0mm 0mm,clip,width=1.4\textwidth]{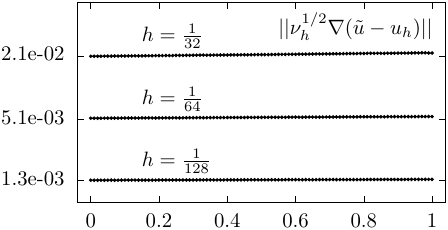}
  \end{minipage}
  \caption{Example 1. $L^2$ - norm and {discrete} energy norm
    errors for Example 1 with $x=1.0+\delta h$ and $\delta \in
    [0,1].$} 
  \label{fig:error}
\end{figure}
%
\begin{figure}[!h]
  \hspace{0.8cm}
  \begin{minipage}{5cm}
    \includegraphics[trim=0mm 0mm 0mm 0mm,clip,width=1.4\textwidth]{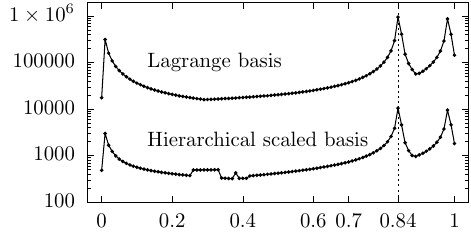}
  \end{minipage}
  \hspace{2.2cm}
  \begin{minipage}{5cm}
    \includegraphics[trim=0mm 0mm 0mm 0mm,clip,width=1.4\textwidth]{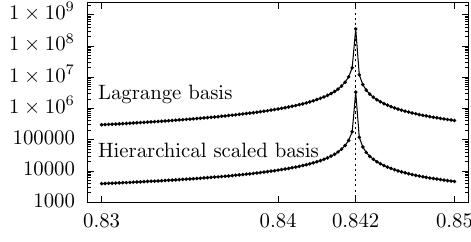}
  \end{minipage}
  \caption{Example 1. Condition number of the stiffness matrix
    depending on the position of the interface $\delta$. Comparison of
    the standard Lagrangian basis and a scaled hierarchical basis for
    $h=1/32$. \textit{Left}: $\delta \in [0,1]$. \textit{Right}:
    Zoom-in for $\delta \in [0.83,0.85]$.}
  \label{fig:condition-numberParabel}
\end{figure}
\subsection{Example 2}

We consider a square domain $\Omega = (-2,2)^2$ that is split into a ball $\Omega_1=B_r(x_0,y_0) $ with $r=0.3$ and $(x_0,y_0)=(1+\delta h,1.2)$, where $\delta \in [0,1]$, and $\Omega_2=\Omega \setminus \bar{\Omega}_1$. We take the exact solution as in example 1, with the level set function replaced by $l(x,y)=(x-x_0)^2+(y-y_0)^2-r^2$. In Figure \ref{fig:exact} we show the configuration and the exact solution of this example. 
For different $\delta\in[0,1]$, this example includes {all configurations A-E introduced above} with different anisotropies.
\begin{figure}[h]
  \hspace{1.2cm}
  \begin{minipage}{4cm}
    \includegraphics[trim=0mm 95mm 0mm 50mm,clip,width=1.4\textwidth]{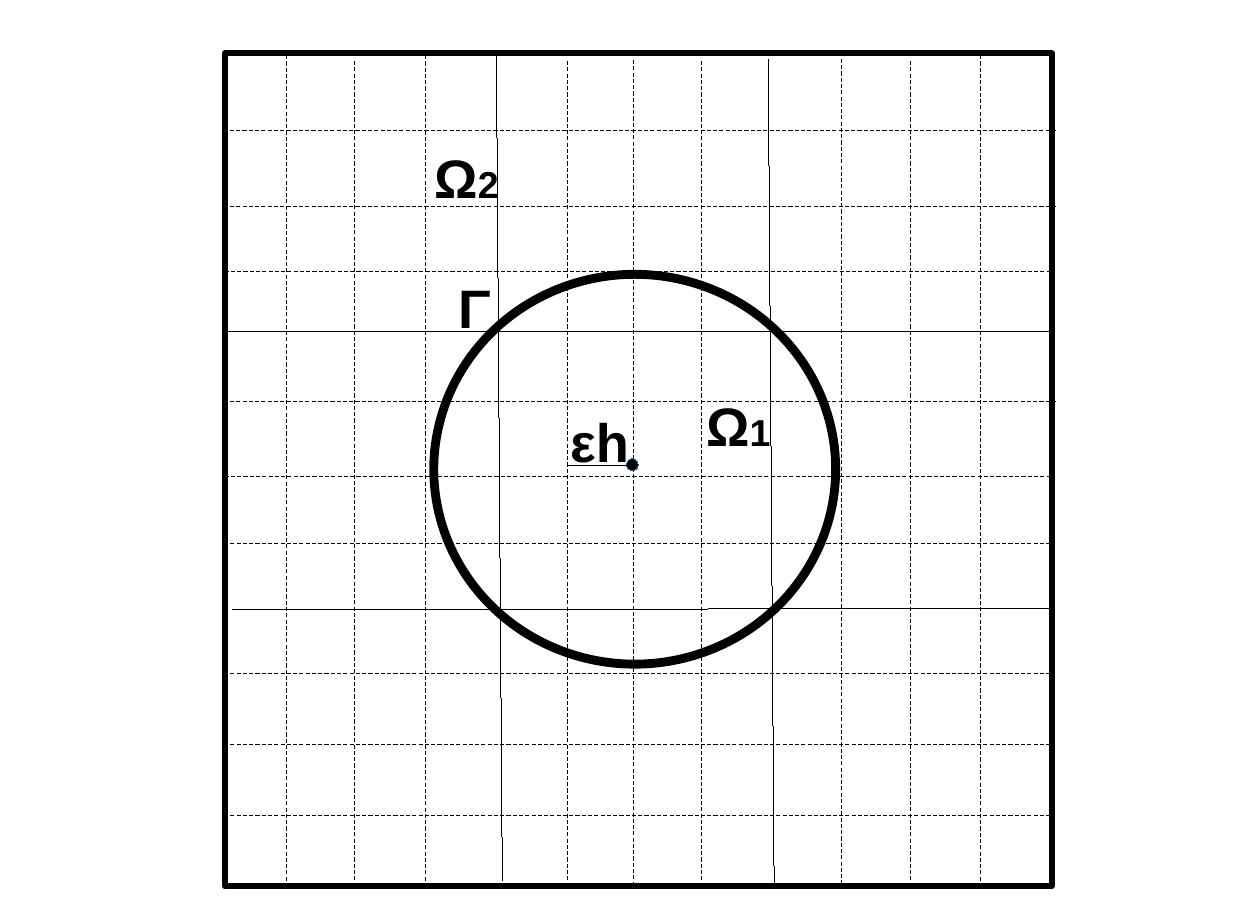}
  \end{minipage}
  \hspace{2.5cm}
  \begin{minipage}{4cm}
    \includegraphics[trim=15mm 5mm 25mm 30mm,clip,width=2.1\textwidth]{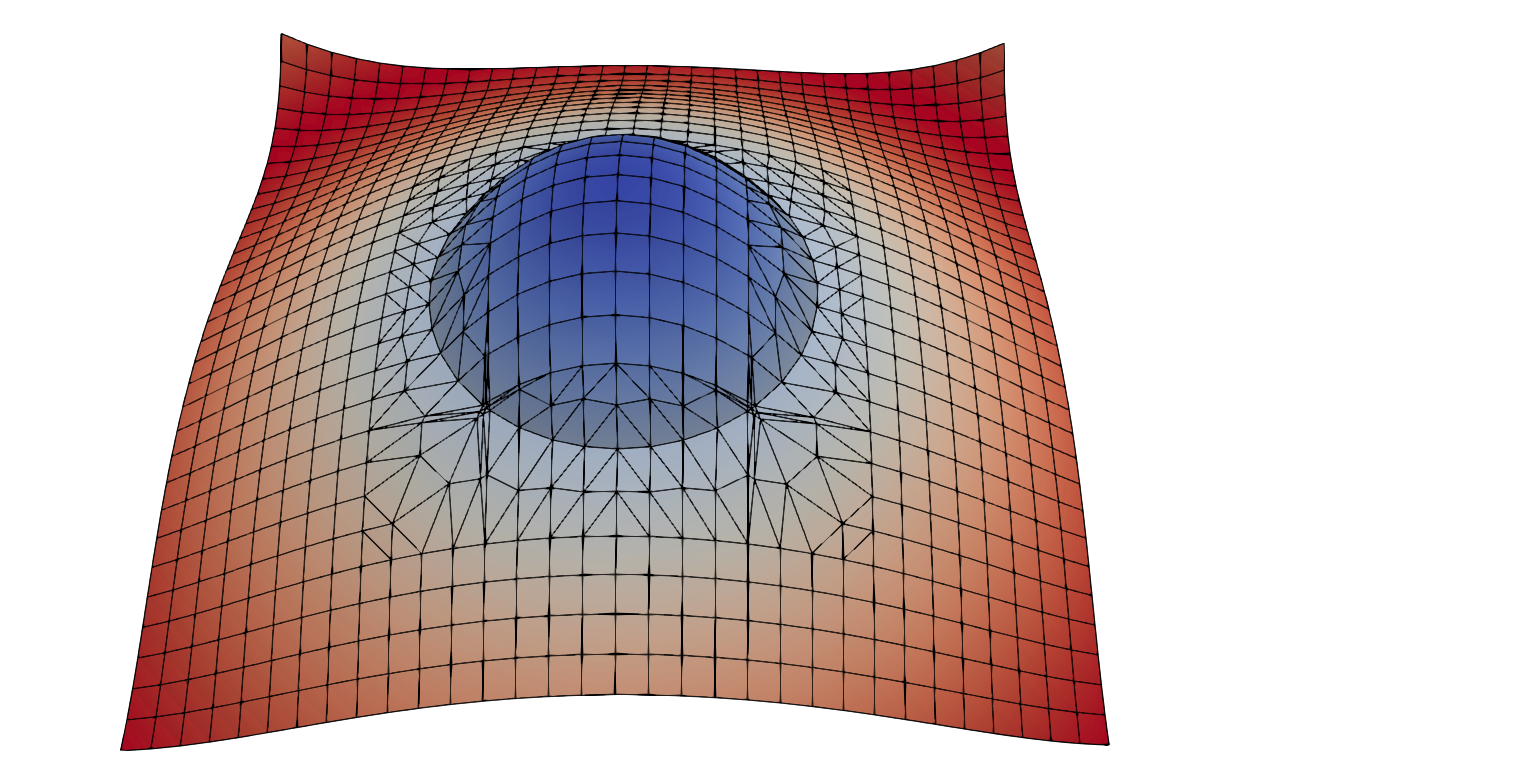}
  \end{minipage}
  \caption{Example 2. \textit{Left}: Configuration of the test problem. \textit{Right}: Sketch of the exact solution}
  \label{fig:exact}
\end{figure}

The $L^2-$norm and the {discrete} energy norm errors are shown in Figure \ref{fig:error-kreis} for $\delta \in [0,1]$ on several levels of global mesh refinement. We observe convergence in both norms for $\delta>0$. The errors vary slightly depending on $\delta$. Its magnitude depends mainly on the number of linearly approximated elements ($n_l$):
We have $n_l= 0$ for $\delta=0$ on all mesh levels, while $n_l>0$ for all other values of $\delta$. We observe that the errors increase from $\delta=0$ to $\delta=0.01$, as $n_l$ increases from 0 to 8. Moreover, the number of linearly approximated elements increases for $h=1/64$ once more, from $n_l=8$ to $n_l=16$ in the range $\delta \in [0.74,0.81]$. Again, we observe a slight increase in the magnitude of the error within this range. This indicates that the constant $c_l n_l^{1/2}$ corresponding to the linearly approximated part in \eqref{energyW2infty} is larger than the constant $c_q$ arising from the quadratically approximated elements.


Table~\ref{tab.2}, Table~\ref{tab.3} and Table~\ref{tab.4} show the $L^2$-norm and the {discrete} energy norm errors obtained on several levels of global mesh refinement for the fixed positions $x_0=1.0+\frac{\delta_0}{64}$ of the midpoint, with $\delta_0\in\{0,\,0.01,\, 0.8\}$, which results in three different cases ($n_l=0, n_l=8$ and $n_l=16$) for $h=1/64$.

In Table~\ref{tab.2} ($\delta_0=0$) we observe fully quadratic (resp. cubic convergence) in the {discrete} energy norm (resp. the $L^2$-norm) 
as shown in Theorem~\ref{t10}, as no linearly approximated elements are present. 
 This changes slightly for the other values of $\delta_0$, see Table~\ref{tab.3} and Table~\ref{tab.4}. 
 
 In Table~\ref{tab.3} ($\delta_0=0.01$), we see that 8 linearly approximated elements were required on all mesh levels. The convergence order in the {discrete} energy norm seems to be fully quadratic {(according to \eqref{energyW2infty}), while in the $L^2$-norm error the
logarithmic factor $|\ln(h)|^{1/2}$ leads to a slightly reduced convergence, as predicted in  Theorem~\ref{t10}.}
 
 For $\delta_0=0.8$, the number $n_l$ increases from 8 to 16 between the coarsest and the second-coarsest refinement level and stays constant from then, see Table~\ref{tab.4}. This is again reflected in the magnitude of the error: The reduction factor between the coarsest mesh levels lies below 4 in the energy norm, and below 8 in the $L^2$-norm error, which shows again that {the term $c_l n_l^{1/2} |\ln(h)|^{1/2}$ in front of the linearly approximated part is larger than the constant $c_q$ in front of the quadratic counterpart. On the remaining mesh levels, the estimated convergence is again fully quadratic in the {discrete} energy norm and due to the logarithmic term slightly below 3 in the $L^2$-norm, in agreement with Theorem~\ref{t10}.}
  
 For $\delta_0 = 0.8$ and $h_P=\frac{1}{32}$, we show the resulting finite element mesh in Figure \ref{fig:eps_08-3}, where in 8 of the 18 patches, which are cut by the interface, a linear approximation was required, including a zoom around one linearly approximated patch on the right.
\begin{figure}[h]
  \hspace{1.2cm}
  \begin{minipage}{5cm}
    \includegraphics[trim=0mm 0mm 0mm 0mm,clip,width=1.4\textwidth]{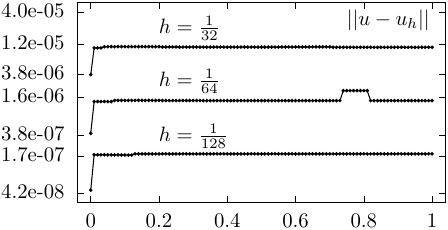}
  \end{minipage}
  \hspace{2.2cm}
  \begin{minipage}{5cm}
    \includegraphics[trim=0mm 0mm 0mm 0mm,clip,width=1.4\textwidth]{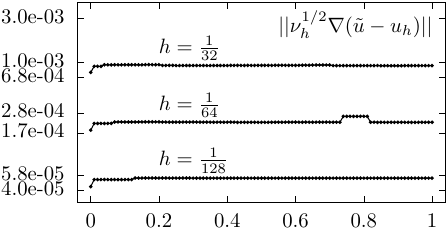}
  \end{minipage}
  \caption{Example 2. $L^2$ - norm and {discrete} energy-norm errors depending on $x=1.0+\delta h$ with $\delta \in [0,1].$}
  \label{fig:error-kreis}
\end{figure}
\begin{table}[h]
  \centering 
  \begin{tabular}{ c | c  c | c  c }
    \toprule
    $h$	& $L^2$ - error            & $EOC$        & energy error          & $EOC$       	\\  \midrule
    \rule{0pt}{2.8ex}
    $1/32$ &  $3.44\cdot 10^{-6}$     &  -           &  $4.36\cdot 10^{-4}$   &  -              \\   
    \rule{0pt}{3ex}
    $1/64$ &  $3.85\cdot 10^{-7}$     &  $3.159$     &  $9.69\cdot 10^{-5}$   &  $2.170$      \\   
    \rule{0pt}{3ex}
    $1/128$ &  $4.56\cdot 10^{-8}$    &  $3.078$     &  $2.28\cdot 10^{-5}$   &  $2.085$        	\\   
    \rule{0pt}{3ex}
    $1/256$ &  $5.54\cdot 10^{-9}$    &  $3.040$     &  $5.52\cdot 10^{-6}$   &  $2.048$        	\\   \bottomrule
  \end{tabular} 
  \caption{Example 2. $L^2$ -norm and {modified} energy norm errors, and convergence order for $\delta_0 = 0$ ($n_l=0$).\label{tab.2}}
\end{table}
\begin{table}[h]
  \centering 
  \begin{tabular}{c | c  c | c  c | c | c }
    \toprule
    $h$	& $L^2$ - error          & $EOC$        & energy error          & $EOC$      & $PN$	 &$n_l$ \\  \midrule
    \rule{0pt}{2.8ex}
    $1/32$  &  $1.05\cdot 10^{-5}$   &  -           &  $9.00\cdot 10^{-4}$    &  -         &  $18$	 &  $8$    \\   
    \rule{0pt}{3ex}
    $1/64$  &  $1.37\cdot 10^{-6}$   &  $2.943$     &  $2.15\cdot 10^{-4}$    &  $2.066$    &  $36$  &  $8$   \\   
    \rule{0pt}{3ex}
    $1/128$  &  $1.81\cdot 10^{-7}$  &  $2.920$     &  $5.27\cdot 10^{-5}$     &  $2.028$    &  $76$  &  $8$    \\   
    \rule{0pt}{3ex}
    $1/256$  &  $2.39\cdot 10^{-8}$  &  $2.921$     &  $1.30\cdot 10^{-5}$     &  $2.015$    &  $154$ &  $8$    \\   \bottomrule
  \end{tabular} 
  \caption{Example 2. $L^2$- and {discrete} energy - norm errors for $\delta_0 = 0.01$, including estimated convergence orders obtained from two consecutive values. $PN$ denotes the number of patches which are cut by the interface and $n_l$ the number of the linear approximated elements.\label{tab.3}}
\end{table}
\begin{table}[!h]
  \centering 
  \begin{tabular}{ c | c c | c c | c |  c}
    \toprule
    $h$	& $L^2$ - error          & $EOC$          &energy error          & $EOC$       & $PN$	 &$n_l$	\\  \midrule
    \rule{0pt}{2.8ex}
    $1/32$  &  $1.09\cdot 10^{-5}$   &  -            &  $9.19\cdot 10^{-4}$   &  -         &  $18$	 &  $8$     \\   
    \rule{0pt}{3ex}
    $1/64$  &  $2.07\cdot 10^{-6}$   &  $2.390$      &  $2.57\cdot 10^{-4}$   &  $1.839$    &  $36$  &  $16$  \\   
    \rule{0pt}{3ex}
    $1/128$ &  $2.73\cdot 10^{-7}$   &  $2.924$      &  $6.35\cdot 10^{-5}$   &  $2.016$    &  $76$  &  $16$    \\   
    \rule{0pt}{3ex}
    $1/256$ &  $3.59\cdot 10^{-8}$   &  $2.927$      & $ 1.58\cdot 10^{-5}$   &  $2.007$    &  $152$ &  $16$   \\   \bottomrule
  \end{tabular} 
  \caption{Example 2. $L^2$ and {discrete} energy - norm errors, including an estimated convergence order for $\delta = 0.8$.\label{tab.4}}
\end{table}
\begin{figure}[h]
  \hspace{1.2cm}
  \begin{minipage}{5cm}
    \includegraphics[trim=110mm 0mm 115mm 0mm,clip,width=1.1\textwidth]{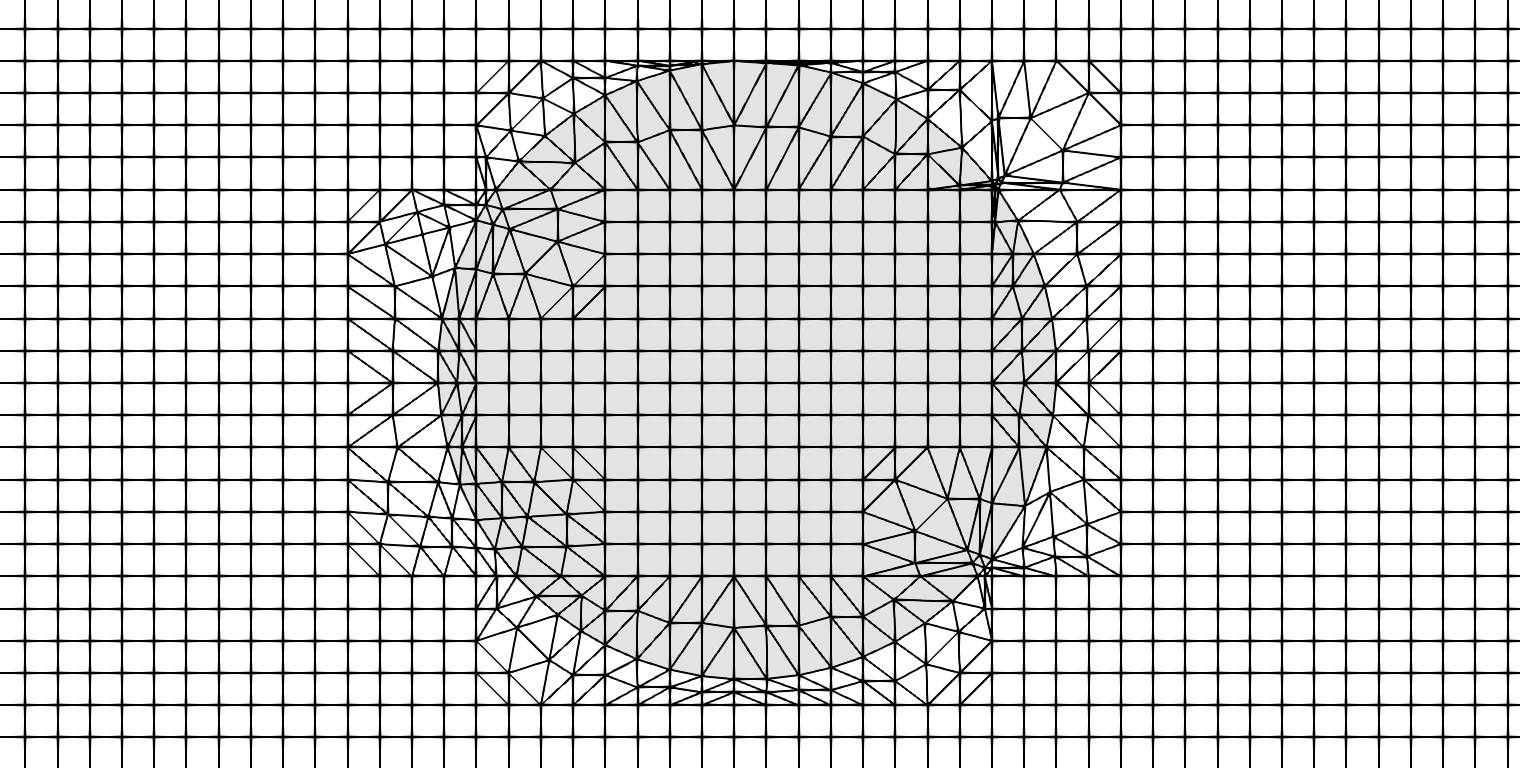}
  \end{minipage}
  \hspace{2.1cm}
  \begin{minipage}{5cm}
    \centering
    \includegraphics[trim=0mm 0mm 90mm 30mm,clip,width=1.4\textwidth]{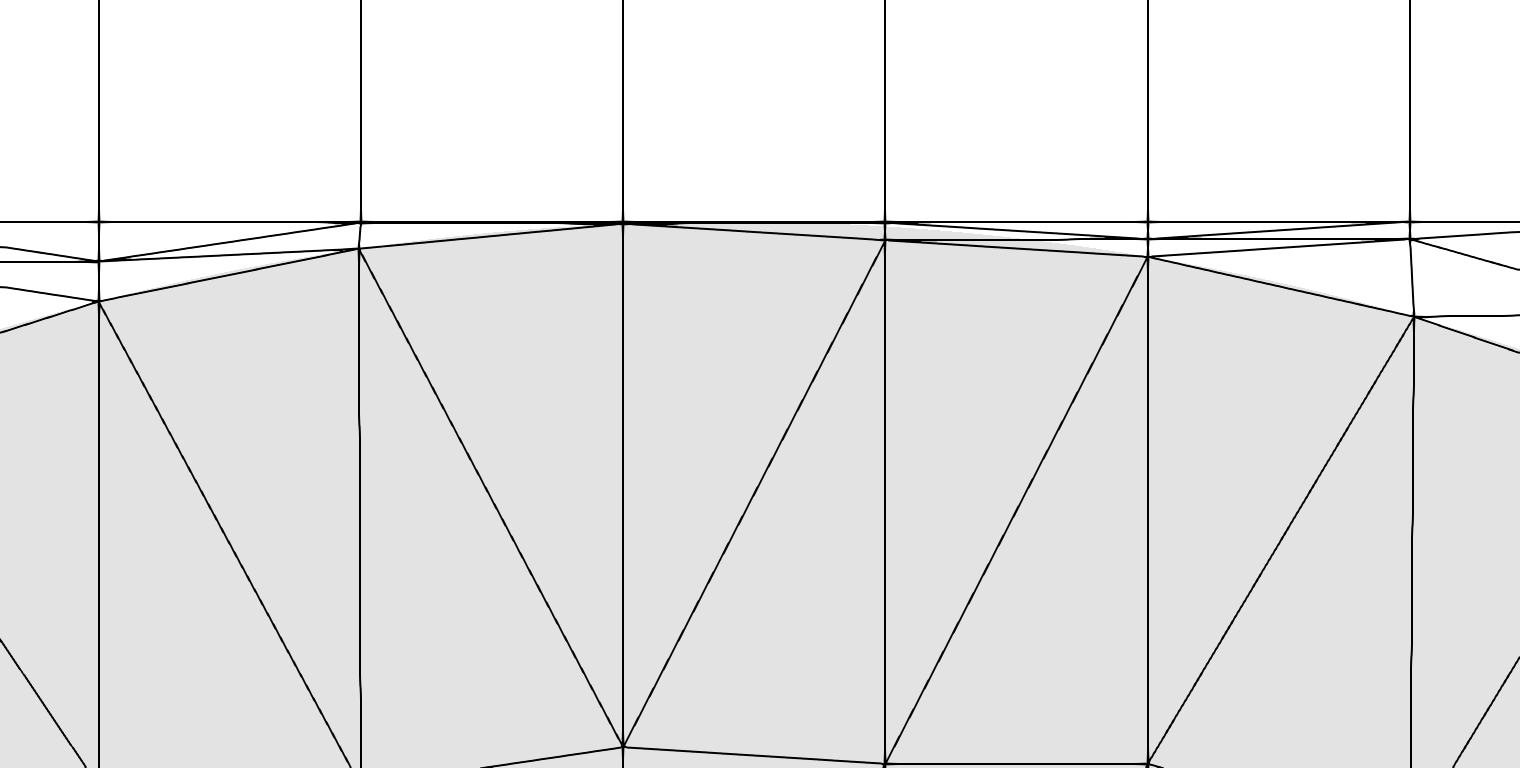}
  \end{minipage}
  \caption{Example 2. \textit{Left}: Illustration of the sub-elements for $h=1/32$ and $\delta = 0.8$. \textit{Right}: Zoom of the upper part with  linearly approximated elements (top right).}
  \label{fig:eps_08-3}
\end{figure}
%
%

In Figure \ref{fig:condition-numberKreis1}, we show how the condition numbers depend on the parameter $\delta \in [0,1]$ when moving the interface. We get the largest condition numbers at $\delta \approx 0.04$ for $h=1/32$ and at $\delta \approx 0.07$ for $h=1/64$, respectively. The condition numbers are again reduced by a factor of approx.\,100 for the scaled hierarchical basis compared to the standard Lagrangian basis. 
 
\begin{figure}[h]
  \hspace{1.1cm}
  \begin{minipage}{5cm}
    \includegraphics[trim=0mm 0mm 0mm 0mm,clip,width=1.4\textwidth]{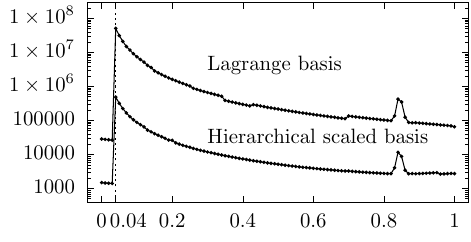}
  \end{minipage}
  \hspace{2.2cm}
    \begin{minipage}{5cm}
    \includegraphics[trim=0mm 0mm 0mm 0mm,clip,width=1.4\textwidth]{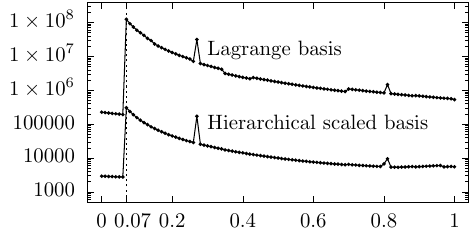}
  \end{minipage}
  \caption{Example 2. Condition number of the stiffness matrix depending on the displacement of the circle. Comparison of the Lagrange and hierarchical scaled basis for $h=1/32$ (left) and $h=1/64$ (right).}
  \label{fig:condition-numberKreis1}
\end{figure}
%
\section{Conclusion}
We have presented an extension of the locally modified finite element method for interface problems introduced in \cite{Frei1}, to second order. {We were able to show optimal-order error estimates of order two in a discrete energy norm and almost of order three (up to a logarithmic term) in the $L^2$-norm.} Finally, we have presented different numerical examples that illustrate the convergence behaviour and the performance of the method. In future, we plan to {extend the method} to \emph{inf-sup} stable finite elements for the discretization of interface problems including the Stokes- and Navier-Stokes equations.


\section*{Appendix A: Linear interface approximation}

We distinguish between the following five cases, see Figure~\ref{fig:config}
\begin{itemize}
\item Configuration A: The patch is cut in two opposite nodes.
\item Configuration B: The patch is cut at the interior of one edge and in one node.
\item Configuration C: The patch is cut at the interior of two opposite edges.
\item Configuration D: The patch is cut at the interior of two adjacent edges with \\
  $r\in (0,\frac{1}{2}),\; s\in (\frac{1}{2},1)$.
\item Configuration E: The patch is cut at the interior of two adjacent edges with 
  \begin{itemize}
  \item $r\in (0,1)$ and $s\in (0,\frac{1}{2})$
  \item $r\in (\frac{1}{2},1)$ and $s\in (0,1)$
  \end{itemize}
\end{itemize}

The subdivisions can be anisotropic with the parameters $r,s \in(0,1)$ in the configurations $B$, $C$, $D$ and $E$. These parameters describe the relative position of the intersection points with the interface on the edges. We denote by $\vec{e}_i\in \mathbb{R}$, $i=1,2,3,4,$ the vertices on the edge. When the interface intersects an edge, we move the corresponding point $\vec{e}_i$, $i=1,...,4$ on the intersected edge to the point of the intersection (see Figure \ref{fig:config}). If an edge is not intersected by the interface, we take $\vec{e}_i$ as midpoint of this edge. 
By $\vec{x}_m\in \mathbb{R}^2$ we denote the midpoint of the patch, which has different positions depending on the configurations. {Precisely, it is chosen} as intersection of the line connecting $\vec{e}_1$ and $\vec{e}_3$ with the line connecting $\vec{e}_2$ and $\vec{e}_4$ for configurations $A$, $C$ and $E$. For configuration $B$ we choose the midpoint as intersection of the line connecting $\vec{e}_1$ and $\vec{e}_3$ with the line connecting $\vec{x}_1$ and $\vec{e}_2$. The midpoint 
for the configuration $D$ can be chosen as midpoint of the line segment $\vec{e}_1\vec{e}_2$. 

{In all configurations the patch is first} divided into four quadrilaterals. We note that each of these has at least one right angle. {The sub-quadrilaterals are then further divided into two triangles by either resolving the interface with an interior mesh line or -if this is not necessary- by splitting the largest interior angle of the quadrilateral.}


\section*{Appendix B: Proof of Lemma \ref{linear}}
\begin{proof}
  {
    First, the patch is split into four sub-quadrilaterals $K_1,...,K_4$, each of which is then split into two triangles. If we can show that all angles of the quadrilaterals 
    are bounded by $135^\circ$, this applies for the sub-triangles as well.
    Moreover, if the splitting into triangles in $K_i$ for some $i=1,...,4$ is not 
    determined by the interface position, we split in such a way that the largest angle of $K_i$ is divided. In this case 
    the bound for the maximum angles of the sub-triangles can be further improved.

    We consider the {configurations A-E} shown in Figures~\ref{fig:config} separately. In all cases the angles at the vertices $\vec{x}_i, i=1,...,4$ are exactly $90^{\circ}$
    and the angles at the edge midpoints $\vec{e}_i, i=1,...,4$ lie between $45^\circ$ and $135^{\circ}$ 
    (Note that this bound is not sharp, if we divide in an optimal way into sub-triangles).

    In configuration $A$ we have two squares and four right-angled triangles. This case is obvious and the maximum angle 
    of the sub-triangles is $90^\circ$.

    In configuration B and C each quadrilateral $K_i (i=1,...,4)$ has two right angles, as the positions of $\vec{e}_1$ and $\vec{e}_3$, or $\vec{e}_2$ and $\vec{e}_4$, respectively, are fixed. Let us 
    consider examplarily the configuration shown in 
    Figures~\ref{fig:config}(b). As mentioned above, the angles in $\vec{e}_2$ and $\vec{e}_4$ lie between $45^\circ$ and $135^\circ$. By symmetry the angles around $\vec{x}_m$ are exactly the same. 
    Therefore, in 
    both configurations all angles of the sub-triangles lie below $135^\circ$.

    In configuration $D$, we get one degenerate quadrilateral $K_2$ with a maximum angle of $180^\circ$. As this angle is divided by connecting $\vec{x}_m$ and $\vec{x}_2$
    we have the following bounds for the triangles of $K_2$
    \[
    \begin{aligned}
      \cos(\angle \vec{e}_1\vec{x}_m\vec{x}_2)&= \frac{(\vec{e}_1-\vec{x}_m)\cdot (\vec{x}_2-\vec{x}_m)}{|\vec{e}_1-\vec{x}_m|\cdot|\vec{x}_2-\vec{x}_m|}
      =\frac{((r-1),-s)\cdot ((1-r),-s)}{(1-r)^2+s^2 }\\
      &=\frac{-(1-r)^2+s^2}{(1-r)^2+s^2}\in \Big(-\frac{3}{5}, \frac{3}{5}\Big).
    \end{aligned}
    \]
    such that $\angle \vec{e}_1\vec{x}_m\vec{x}_2 \in (53^{\circ},127^{\circ})$. The other angles in $\vec{x}_m$ are bounded above by $90^\circ$.
    The angles in $\vec{e}_1,...,\vec{e}_4$ are again bounded by $135^\circ$.

    In configuration E, the angles in $\vec{e}_1,...,\vec{e}_4$ are all between $63^\circ$ and $117^\circ$. A bound on the angles of the quadrilaterals at $\vec{x}_m$ is therefore 
    given by $360^\circ - 2\cdot 63^\circ - 90^\circ = 144^\circ$. This maximum is attained for $r\to 1, s\to 0$ (cf. Figure~\ref{fig:config} (f)). 
    The bound is further improved, as in $K_1, K_2$ and $K_3$ the largest angles are divided when splitting into sub-triangles, resulting in angles below $90^\circ$.
    For the angle {of the subtriangle} $T_1$ at $\vec{x}_m$ we have 
    \[
    \begin{aligned}
      \cos(\angle \vec{e}_1\vec{x}_m\vec{e}_2)&= \frac{(\vec{e}_1-\vec{e}_3)\cdot (\vec{e}_2-\vec{e}_4)}{|\vec{e}_1-\vec{e}_3|\cdot|\vec{e}_2-\vec{e}_4|}
      =\frac{(r-1/2,-1)\cdot (1,s-1/2)}{\sqrt{1+(r-1/2)^2}\cdot \sqrt{1+(s-1/2)^2} }\\
      &=\frac{r-s}{\sqrt{1+(r-1/2)^2}\cdot \sqrt{1+(s-1/2)^2}}\in \Big(-\frac{1}{\sqrt{5}}, \frac{4}{5}\Big)
    \end{aligned}
    \]
    such that $\angle \vec{e}_1\vec{x}_m\vec{e}_2 \in (36^{\circ},117^{\circ})$.
  }
\end{proof}
\section*{Appendix C: Quadratic interface approximation}
For the elements with curved boundaries 
we need to ensure that all elements are allowed in the sense of Assumption~\ref{ass.allowed}
(see also Figure \ref{fig:patch}) and that the maximum angle condition shown above remains valid. 
As described in Section~\ref{sec.impl}, we move certain points to the interface in order to obtain a 
second-order interface approximation. This is possible
if the following criteria are satisified. Otherwise, we leave them 
in their original positions and obtain a first-order interface approximation in the respective element. By $\alpha_{\triangle}$
  we denote the largest angle in a triangle. 

In the first step, we move the midpoint of the patch. 
If this is possible, we shift the other corresponding points in a second step (if possible). 
We use the following criteria for each configuration.\\
\newline
\textbf{First step: Move the midpoints}
\begin{itemize}
\item \textbf{Configuration $A$}: the midpoint of the patch can be moved along the normal line $\vec{n}$ 
  (see Figure \ref{fig:config}a) if $\alpha_{\triangle} \;\leq \; \alpha_{max} \;<\; 180^{\circ}$.
\item \textbf{Configuration $B$}: the midpoint of the patch can be moved along the line segment $\vec{e}_1\vec{e}_3$, if the relative length $d=\frac{|\vec{e}_1-\vec{x}_m|}{|\vec{e}_2-\vec{e}_1|}$ 
  of the line $\vec{e}_1\vec{x}_m$ (see Figure \ref{fig:config}b) 
  satisfies $\epsilon<d<1-\epsilon$ and $\alpha_{\triangle} \;\leq \; \alpha_{max} \;<\; 180^{\circ}$.
\item \textbf{Configuration $C$}: the midpoint of the patch can be moved along the line segment $\vec{e}_2\vec{e}_4$, if the parameter $d$ (see Figure \ref{fig:config}c) 
  satisfies $\epsilon<d<1-\epsilon$. 
\item \textbf{Configuration $D$}: the midpoint of the patch can be moved along the normal line $\vec{n}$ 
  (see Figure \ref{fig:config}d) if $\alpha_{\triangle} \;\leq \; \alpha_{max} \;<\; 180^{\circ}$.
\item \textbf{Configuration $E$}: in this configuration we do not need to move the midpoint of the patch (see Figures \ref{fig:config}e), and \ref{fig:config}f).
\end{itemize}
\textbf{Second step: Move other points}
\begin{itemize}
\item In the second step, we investigate the other two points that need to be moved in order to obtain a second-order interface approximation. 
  These are the points between the midpoint of the patch and the  points where exterior edges are intersected. 
  In all configurations, we obtain triangles with one curved edge (see Figure \ref{fig:config-step2}). It can happen that this curved edge intersects other edges of the element $T$.
  Thus, we shift the corresponding points along the normal line to the interface, if and only if the curved edge of the triangle does not 
  cut any other edges and $\alpha_{\triangle} \;\leq\;\alpha_{max} \;<\; 180^{\circ}$. 
\end{itemize}
%

%

\bibliographystyle{ieeetr}

\end{document}